\newtheorem{definition}{Definition}
\newtheorem{theorem}{Theorem}
\newtheorem{lemma}{Lemma}
\newtheorem{remark}{Remark}
\newtheorem{corollary}{Corollary}
\title[Smoothness of the topological equivalence]{Smoothness of Topological Equivalence on the Half Line for Nonautonomous Systems  }
\author[Casta\~neda]{\'Alvaro casta\~neda}
\author[Monz\'on]{Pablo Monz\'on}
\author[Robledo]{Gonzalo Robledo}
\address{Universidad de Chile, Departamento de Matem\'aticas. Casilla 653, Santiago, Chile}
\address{Universidad de la Rep\'ublica. Facultad de Ingenier\'ia, C\'odigo Postal 11300, Montevideo,  Uruguay}
\email{castaneda@uchile.cl, grobledo@uchile.cl, monzon@fing.edu.uy}
\subjclass[2010]{34D09, 37C60, 37B25}
\keywords{Topological Equivalence, Nonautonomous Differential Equations, Nonautonomous hyperbolicity, Uniform Asymptotic Stability, Diffeomorphism}
\thanks{This research has been partially supported by MATHAMSUD program (16-MATH-04 STADE)
and FONDECYT Regular 1170968}
\date{\today}
\begin{document}

\begin{abstract}
We study the differentiability properties of the topological equivalence between a uniformly asymptotically stable linear nonautonomous system and a perturbed system with suitable nonlinearities. For this purpose, we construct a uniformly continuous homeomorphism inspired in the Palmer's one restricted to the positive half line,
providing sufficient conditions ensuring its $C^{r}$--smoothness. Additionally, we study the preservation of the uniform stability properties by this homeomorphism.
\end{abstract}

\maketitle

\section{Introduction}

This work is devoted to study the relation between the solutions of the systems
\begin{equation}
\label{scl}
\dot{x}=A(t)x
\end{equation}
and
\begin{equation}
\label{scnl}
\dot{y}=A(t)y+f(t,y),
\end{equation}
where $A \colon \mathbb{R}^+ \to M(n, \mathbb{R})$  and $f: \mathbb{R}^+ \times \mathbb{R}^n \to \mathbb{R}^n$ have the properties:

\begin{itemize}
    \item [{\bf{(P1)}}] $A(t)$ is continuous and $\sup\limits_{t \in \mathbb{R}^+}||A(t)||= M > 0.$

   \item [{\bf{(P2)}}] The system \eqref{scl} is uniformly asymptotically stable, namely,
   there exist constants $K\geq 1$ and $\alpha>0$ such that its transition
matrix $\Phi(t,s)$ verifies
\begin{equation}
\label{UAS}
||\Phi(t,s)||\leq Ke^{-\alpha(t-s)} \quad \textnormal{for any $t\geq s\geq 0$}.
\end{equation}

    \item [{\bf{(P3)}}] For any $t \geq 0$ and any couple $(y, \bar{y}) \in \mathbb{R}^n \times \mathbb{R}^n$ it follows
\begin{equation}
\label{perturbacion}
\mid f(t,y) - f(t,\bar{y}) \mid \leq \gamma \mid y - \bar{y} \mid \quad \textnormal{and} \quad \mid f(t, y)\mid \leq \mu,
\end{equation}
\end{itemize}
where $|| \cdot ||$ and $|\cdot|$ denote a matrix norm and vector norm respectively.

In the autonomous case, P. Hartman \cite{Hartman0} and D.M. Grobman \cite{Grobman} found a local homeomorphism between the solutions of a nonlinear system and the solutions of its linearization around an hyperbolic equilibrium point. C. Pugh in \cite{Pugh} enhanced the previous result by constructing an explicit and global homeomorphism for the particular case of perturbed linear systems, also requiring hyperbolicity of the equilibrium point.

In the nonautonomous case, K.J. Palmer in \cite{Palmer} extended the Pugh's result using the exponential dichotomy as a  natural version of the hyperbolicity property. In addition, Palmer introduced the concept of topological equivalence, which was generalized by J.L Shi \textit{et. al} in \cite{Shi} as strongly topological equivalence.

Now, in order to present the  aforementioned concepts, we will consider an interval  $J \subset \mathbb{R}.$

\begin{definition}\cite{Coppel, Kloeden}
\label{DE}
The linear system \eqref{scl}
 has an exponential dichotomy property on $J \subset \mathbb{R}$ if there exists a projection $P^{2}=P$
and constants $\bar{K}\geq 1$, $\bar{\alpha}>0$,  such that its fundamental matrix $\Phi(t)$ verifies:
\begin{equation}
\label{ED}
\left\{\begin{array}{rcl}
||\Phi(t)P\Phi^{-1}(s)||&\leq & \bar Ke^{-\bar \alpha(t-s)} \quad \textnormal{for any} \quad t\geq s \quad t,s \in J,\\
||\Phi(t)(I-P)\Phi^{-1}(s)|| &\leq & \bar Ke^{-\bar \alpha(s-t)} \quad \textnormal{for any} \quad s \geq t, \quad t,s \in J.\\
\end{array}\right.
\end{equation}
\end{definition}
\begin{definition}
\label{TopEq}
The systems \textnormal{(\ref{scl})} and \textnormal{(\ref{scnl})} are $J-$topologically equivalent
if there exists a function $H\colon J \times \mathbb{R}^{n}\to \mathbb{R}^{n}$ with the properties
\begin{itemize}
\item[(i)] If $x(t)$ is a solution of \textnormal{(\ref{scl})}, then $H[t,x(t)]$ is a solution
of \textnormal{(\ref{scnl})},
\item[(ii)] $H(t,u)-u$ is bounded in $J \times \mathbb{R}^{n}$,

\item[(iii)]  For each fixed $t\in J$, $u\mapsto H(t,u)$ is an homeomorphism of $\mathbb{R}^{n}$,
\end{itemize}
In addition, the function $G(t,u)=H^{-1}(t,u)$ has properties \textnormal{(ii)--(iii)} and maps solutions of \textnormal{(\ref{scnl})} into solutions of \textnormal{(\ref{scl})}.
\end{definition}

\begin{definition}
\label{Stopeq}
 The systems  \textnormal{(\ref{scl})} and \textnormal{(\ref{scnl})} are $J-$strongly topologically equivalent if they are $J-$topologically equivalents and $H$ is a uniform homeomorphism. Namely, for any $\varepsilon>0$, there exists $\delta(\varepsilon)>0$ such that $|u-\tilde{u}|<\delta$ implies
$|H(t,u)-H(t,\tilde{u})|<\varepsilon$ and $|G(t,u)-G(t,\tilde{u})|<\varepsilon$ for any $t\in J$.
\end{definition}

\begin{definition}
\label{TopEqCr}
The systems \textnormal{(\ref{scl})} and \textnormal{(\ref{scnl})} are $C^{r}$ $J-$topologically equivalent if are $J-$topologically equivalent and $u\mapsto H(t,u)$ is a $C^{r}$--diffeomorphism,
with $r\geq 1$.

\end{definition}

Notice that Definition \ref{DE} implies uniform asymptotic stability when $P = I$ and $J = \mathbb{R}^+$ as in the property {\bf{(P2)}}.
We point out that  definitions \ref{TopEq} and \ref{Stopeq} are slight modifications of the ones introduced by Palmer and Shi, since they considered the particular cases $J = \mathbb{R}$ in \cite{Palmer, Shi} and $J = \mathbb{R}^+$ in \cite{Palmer79}.

In the Section 2 we state our first result, which provides sufficient conditions ensuring the $\mathbb{R}^{+}-$strongly to\-po\-lo\-gi\-cal equivalence between systems \eqref{scl} and \eqref{scnl}. We follow the lines of Palmer \cite{Palmer} which constructs the maps $H$ and $G$ by combining the Green's function associated to the exponential dichotomy with a perturbation $f$ satisfying \textbf{(P3)}, obtaining a $\mathbb{R}$--topological equivalence, this was improved by Shi \cite{Shi} which obtains a $\mathbb{R}$--strong topological equivalence. Palmer and Shi's results are extended in \cite{Jiang,Reinfelds}
by considering non--exponential dichotomies an properties more general than \textbf{(P3)}. Nevertheless to the best of our knowledge, this Green's function approach has always assumed that (\ref{scl}) has an exponential dichotomy in $\mathbb{R}$ rather than in $\mathbb{R}^+$. In this context, our construction has technical differences with the previous works and has interesting consequences from a differentiability point of view.

In the Section 3 we show that if (\ref{scnl}) has an equilibrium, it is unique and the $\mathbb{R}^{+}$--strongly topological equivalence preserves the uniform asymptotic stability between the equilibria of both systems. We also compare this preservation result with a Lyapunov's converse result, which show that both systems admit the same Lyapunov function under smallness assumptions.

The Section 4 states our main Theorem, which provides additional conditions on the smoothness of $f$ which ensures that  (\ref{scl}) and (\ref{scnl}) are $C^{r}$ $\mathbb{R}^{+}-$ topologically equivalent. The restriction to $\mathbb{R}^{+}$ allows us to work in a simpler way than \cite{JDE}. It is important to note that, contrarily to the autonomous context, in the nonautonomous framework there are few results about the global differentiability of maps $H$ and $G$  and we refer the reader to \cite{Cuong} for local results.


\section{Strongly topological equivalence on the half line}



The following Theorem provides a sufficient condition ensuring the $\mathbb{R}^+-$strongly topological equivalence between the systems \eqref{scl} and \eqref{scnl}.

\begin{theorem}
\label{teorema1}
Assume that {\bf{(P1)--\bf{(P3)}}} are satisfied
and
\begin{equation}
\label{hipotesis}
\frac{K\gamma}{\alpha}<1,
\end{equation}
then systems (\ref{scl}) and (\ref{scnl}) are $\mathbb{R}^+-$strongly topologically equivalent.
\end{theorem}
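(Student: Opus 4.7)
My plan is to adapt the Palmer--Shi construction to the half line with the pure stability projection $P=I$. Let $Y(\cdot,\tau,\xi)$ denote the solution of \eqref{scnl} with $Y(\tau,\tau,\xi)=\xi$. I would first define
\begin{equation*}
H(\tau,\xi):=\xi-\int_{0}^{\tau}\Phi(\tau,s)\,f(s,Y(s,\tau,\xi))\,ds.
\end{equation*}
By \eqref{UAS} together with the bound $|f|\le\mu$ from \eqref{perturbacion}, the integrand is dominated by $K\mu\,e^{-\alpha(\tau-s)}$, giving $|H(\tau,\xi)-\xi|\le K\mu/\alpha$ and hence property (ii) of Definition \ref{TopEq}. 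A Leibniz-rule differentiation using $\partial_{\tau}\Phi(\tau,s)=A(\tau)\Phi(\tau,s)$ shows that $t\mapsto H(t,Y(t,\tau,\xi))$ solves \eqref{scl}, which is property (i); combined with $H(0,\cdot)=\mathrm{id}$ this produces the conjugacy identity $H(t,Y(t,\tau,\xi))=\Phi(t,\tau)H(\tau,\xi)$ and hence the closed form $H(\tau,\xi)=\Phi(\tau,0)\,Y(0,\tau,\xi)$.

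Second, guided by this conjugacy, I would take $G(\tau,\eta):=Y(\tau,0,\Phi(0,\tau)\eta)$, which admits the symmetric integral representation
\begin{equation*}
G(\tau,\eta)=\eta+\int_0^{\tau}\Phi(\tau,s)\,f\bigl(s,Y(s,0,\Phi(0,\tau)\eta)\bigr)\,ds,
\end{equation*}
so that $|G(\tau,\eta)-\eta|\le K\mu/\alpha$ and the analogue of (i) holds. The flow property of $Y$ and the invertibility of the transition matrix $\Phi(\tau,0)$ yield $G\circ H=H\circ G=\mathrm{id}$, while the classical continuous dependence of ODE solutions on the initial data supplies the continuity needed for the homeomorphism property (iii).

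Third, to obtain the strong (uniform-in-$t$) equivalence of Definition \ref{Stopeq}, I would reformulate the shift $h(\tau,\eta):=G(\tau,\eta)-\eta$ as the unique bounded fixed point of the operator
\begin{equation*}
(\mathcal{T}h)(\tau,\eta)=\int_{0}^{\tau}\Phi(\tau,s)\,f\bigl(s,\Phi(s,\tau)\eta+h(s,\Phi(s,\tau)\eta)\bigr)\,ds
\end{equation*}
on the Banach space of bounded continuous maps with sup norm, where \eqref{hipotesis} makes $\mathcal{T}$ a strict contraction of factor $K\gamma/\alpha<1$. Restricting $\mathcal{T}$ to the closed subspace of maps sharing a common modulus of continuity in $\eta$, one checks that this subspace is invariant, so the fixed point inherits a uniform modulus of continuity in $\eta$, uniformly in $\tau$; the corresponding statement for $H$ follows by inversion.

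The delicate point of the argument is precisely this last step. The explicit formula $H(\tau,\xi)=\Phi(\tau,0)Y(0,\tau,\xi)$ couples a backward nonlinear flow, whose Lipschitz constant grows like $e^{(M+\gamma)\tau}$, with the decaying linear flow $\|\Phi(\tau,0)\|\le Ke^{-\alpha\tau}$; the two exponents cancel only under the much stronger condition $M+\gamma<\alpha$. The fixed-point reformulation is required precisely so that the contraction constant $K\gamma/\alpha$---and not the uniform bound $M$ on $\|A(t)\|$---governs the propagation of the modulus of continuity.
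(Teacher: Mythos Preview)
Your construction is correct and essentially the paper's, with the roles of $H$ and $G$ interchanged: your $H$ is the paper's $G$ and vice versa. The conjugacy identities, the bounds $|H-\mathrm{id}|,|G-\mathrm{id}|\le K\mu/\alpha$, and the fixed--point equation for $h=G-\mathrm{id}$ are all right.

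The gap is in Step~3, and it is twofold. First, the claim that the operator $\mathcal{T}$ preserves ``the closed subspace of maps sharing a common modulus of continuity'' does not follow from the contraction constant $K\gamma/\alpha$ alone. In $(\mathcal{T}h)(\tau,\eta)$ the argument of $h$ is $\Phi(s,\tau)\eta$ with $s\le\tau$, and $\|\Phi(s,\tau)\|$ may grow like $e^{M(\tau-s)}$; hence if $h$ has modulus $\omega$, the naive estimate for $\mathcal{T}h$ produces $\omega(e^{M(\tau-s)}|\eta-\eta'|)$ under the integral, which is not controlled by $\omega$ uniformly in $\tau$. What actually rescues the argument is not the contraction factor but the \emph{boundedness} $|f|\le\mu$ from \textbf{(P3)}: one splits $[0,\tau]=[0,\tau-L]\cup[\tau-L,\tau]$, uses $|f(\cdot)-f(\cdot)|\le 2\mu$ on the first piece (contribution $\le \tfrac{2K\mu}{\alpha}e^{-\alpha L}$), and only on the compact window $[\tau-L,\tau]$ does one use Lipschitzness, where $e^{M(\tau-s)}\le e^{ML}$ is harmless. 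This is precisely the mechanism the paper implements---directly for its $G$ and via Picard iterates $z_j^{*}$ for its $H$---and it is the step your sketch omits.

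Second, ``the corresponding statement for $H$ follows by inversion'' is not valid: the inverse of a uniformly continuous bijection need not be uniformly continuous. You must argue uniform continuity of $H$ independently. Since your $H$ has the explicit integral form $\xi-\int_0^{\tau}\Phi(\tau,s)f(s,Y(s,\tau,\xi))\,ds$, the same $L$--splitting using $|f|\le\mu$ (together with the backward Gronwall bound $|Y(s,\tau,\xi)-Y(s,\tau,\bar\xi)|\le e^{(M+\gamma)(\tau-s)}|\xi-\bar\xi|$ on the window) gives the result directly, exactly as the paper does for its map $G$. Your final paragraph correctly identifies the competing exponents, but the remedy is the $\mu$--bound and the splitting, not the contraction constant by itself.
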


\begin{proof}
In order to make a more readable proof, we will decompose it in several steps. Namely, the step 1 defines two auxiliary systems whose solutions are used in the step 2 to construct the maps $H$ and $G$. To prove that these maps establish a topological equivalence, the properties (i)--(ii) are verified in the step 3, while the uniform continuity is proved in the steps 4 and 5.
\medskip

\noindent\emph{Step 1: Preliminaries.} Let
$t \mapsto x(t,\tau,\xi)$ and $t\mapsto y(t,\tau,\eta)$ be solutions of (\ref{scl}) and (\ref{scnl}) passing through $\xi$ and $\eta$
at $t=\tau$. Now, we will consider the initial value problems
\begin{equation}
\label{pivote2}
\left\{\begin{array}{rcl}
w'&=&A(t)w-f(t,y(t,\tau,\eta))\\
w(0)&=& 0,
\end{array}\right.
\end{equation}
and
\begin{equation}
\label{pivote1}
\left\{\begin{array}{rcl}
z'&=&A(t)z+f(t,x(t,\tau,\xi)+z)\\
z(0)&=& 0.
\end{array}\right.
\end{equation}

By using the variation of parameters formula we have that
\begin{equation}
\label{w-star}
w^{*}(t;(\tau,\eta))=-\int_{0}^{t}\Phi(t,s)f(s,y(s,\tau,\eta))\,ds
\end{equation}
is the unique solution of (\ref{pivote2}). Let $BC(\mathbb{R}^+, \mathbb{R}^n)$ be the Banach space of bounded continuous functions with the supremum norm. Now,  for any couple $(\tau, \xi) \in \mathbb{R}^+ \times \mathbb{R}^n,$ we define the  operator $\Gamma_{(\tau, \xi)} \colon BC(\mathbb{R}^+, \mathbb{R}^n) \to BC(\mathbb{R}^+, \mathbb{R}^n) $ as follows
\begin{equation}
\phi \mapsto \Gamma_{(\tau, \xi)} \phi := \displaystyle \int_0^t \Phi(t,s) f(s,x(s,\tau,\xi) + \phi) \, ds.
\end{equation}

Since $\gamma K/\alpha<1$ it is easy to see by {\bf{(P2)--(P3)}} that the operator $\Gamma_{(\tau, \xi)} $ is a contraction and by the Banach fixed point theorem it follows that
\begin{displaymath}
z^{*}(t;(\tau,\xi))=\int_{0}^{t}\Phi(t,s)f(s,x(s,\tau,\xi)+z^{*}(s;(\tau,\xi))) \, ds
\end{displaymath}
is the unique solution of (\ref{pivote1}).

On the other hand, by uniqueness of solutions it can be proved that
\begin{equation}
\label{identity1}
z^{*}(t;(\tau,\xi))=z^{*}(t;(r,x(r,\tau,\xi))) \quad \textnormal{for any $r\geq 0$},
\end{equation}
and
\begin{equation}
\label{identity2}
w^{*}(t;(\tau,\nu))=w^{*}(t;(r,y(r,\tau,\nu))) \quad \textnormal{for any $r\geq 0$}.
\end{equation}

\noindent\emph{Step 2: Construction of the maps $H$ and $G$.} For any $t\geq 0$ we define the maps $H(t,\cdot)\colon \mathbb{R}^{n}\to \mathbb{R}^{n}$ and
$G(t,\cdot)\colon \mathbb{R}^{n}\to \mathbb{R}^{n}$ as follows:
\begin{displaymath}
\begin{array}{rcl}
H(t,\xi)&:=&\displaystyle  \xi+\int_{0}^{t}\Phi(t,s)f(s,x(s,t,\xi)+z^{*}(s;(t,\xi))\,ds \\\\
&=&\xi + z^{*}(t;(t,\xi)),
\end{array}
\end{displaymath}
and
\begin{equation}
\label{Homeo-G}
\begin{array}{rcl}
G(t,\eta)&:=&\displaystyle \eta -\int_{0}^{t}\Phi(t,s)f(s,y(s,t,\eta))\,ds \\\\
&=&\eta+w^{*}(t;(t,\eta)).
\end{array}
\end{equation}
By using (\ref{identity1}), we can verify that
\begin{displaymath}
\begin{array}{rcl}
H[t,x(t,\tau,\xi)]&=&\displaystyle  x(t,\tau,\xi)+\int_{0}^{t}\Phi(t,s)f(s,x(s,t,x(t,\tau,\xi)+z^{*}(s;(t,x(t,\tau,\xi))))\,ds \\\\
&=& \displaystyle x(t,\tau,\xi)+\int_{0}^{t}\Phi(t,s)f(s,x(s,\tau,\xi)+z^{*}(s;(\tau,\xi)))\,ds \\\\
&=&x(t,\tau,\xi)+z^{*}(t;(\tau,\xi)).
\end{array}
\end{displaymath}

\noindent\emph{Step 3: $H$ and $G$ satisfy properties (i)--(ii) of Definition \ref{TopEq}.}
By (\ref{scl}) and (\ref{pivote1}) combined with the above equality, we have that
$$
\begin{array}{rcl}
\displaystyle \frac{\partial }{\partial t} H[t,x(t,\tau,\xi)] & = & \displaystyle \frac{\partial }{\partial t} x(t, \tau, \xi) + \frac{\partial }{\partial t} z^*(t; (\tau, \xi))\\\\
& = & A(t)x(t,\tau,\xi) + A(t)z^*(t; (\tau, \xi)) + f(t,H[t,x(t,\tau,\xi)])\\\\
& = & A(t)H[t,x(t,\tau,\xi)] + f(t,H[t,x(t,\tau,\xi)]),
\end{array}
$$
then $t\mapsto H[t,x(t,\tau,\xi)]$ is solution of (\ref{scnl}) passing through
$H(\tau,\xi)$ at $t=\tau$. As consequence of uniqueness of solution we obtain
\begin{equation}\label{conj1}
H[t, x(t,\tau, \xi)] = y(t,\tau, H(\tau,\xi)),
\end{equation}
similarly, it can be proved that $t\mapsto G[t,y(t,\tau,\eta)]$ is solution of (\ref{scl}) passing through $G(\tau, \eta)$
at $t=\tau$ and

\begin{equation}\label{conj2}
G[t, y(t,\tau, \eta)] = x(t,\tau, G(\tau,\eta)) = \Phi(t, \tau)G(\tau,\eta),
\end{equation}
and the property (i) follows.
Secondly, by using (\ref{UAS}) and (\ref{perturbacion}) it follows that
\begin{displaymath}
|H(t,\xi)-\xi|\leq  K\mu\int_{0}^{t}e^{-\alpha(t-s)}\,ds \leq \displaystyle \frac{K\mu}{\alpha}
\end{displaymath}
for any $t\geq 0$. A similar inequality can be obtained for
$|G(t,\eta)-\eta|$ and the property (ii) is verified.

\noindent\emph{Step 4: $H$ is bijective for any $t\geq 0$.} We will
first show that $H(t, G(t, \eta)) = \eta$ for any $t\geq 0$. Indeed,
$$
\begin{array}{rcl}
H[t,G[t,y(t,\tau,\eta)]] & = & G[t,y(t,\tau,\eta)] \\\\ & &  + \displaystyle \int_0^t \Phi(t,s) f(s,x(s,t,G[t,y(t,\tau,\eta)])+z^*(s;(t,G[t,y(t,\tau,\eta)]))) \, ds\\\\
& = &  y(t,\tau, \eta)  -\displaystyle \int_0^t  \Phi(t,s)f(s,y(s,\tau, \eta)) \, ds\\\\
&&+\displaystyle \int_0^t \Phi(t,s) f(s,x(s,t,G[t,y(t,\tau,\eta)])+z^*(s;(t,G[t,y(t,\tau,\eta)]))) \, ds.
\end{array}
$$
Let $\omega(t) = | H[t, G[t,y(t,\tau,\eta)]] - y(t,\tau, \eta)| .$ Hence by using {\bf{(P2)}} and {\bf{(P3)}} we have that
$$
\begin{array}{ll}
 \omega(t) & =  \left |\displaystyle \int_0^t \Phi(t,s) \{f(s,x(s,t,G[t,y(t,\tau,\eta)])+z^*(s;(t,G[t,y(t,\tau,\eta)]))) -f(s,y(s,\tau, \eta))\} \, ds \right |\\\\
&\leq  K \gamma  \displaystyle \int_0^t e^{-\alpha(t-s)} |\{x(s,t,G[t,y(t,\tau,\eta)])+z^*(s;(t,G[t,y(t,\tau,\eta)])) -y(s,\tau, \eta)\}| \, ds.
\end{array}
$$
Notice that,
$$x(s,t,G[t,y(t,\tau,\eta)])+z^*(s;(t,G[t,y(t,\tau,\eta)])) = H[s,x(s,t,G[t,y(t,\tau,\eta)])]$$
and recalling that
$$x(s,t,G[t,y(t,\tau,\eta)]) = x(s, \tau, G(\tau, \eta)) = G[s,y(s,\tau, \eta)],$$
we can see
$$ H[s,x(s,t,G[t,y(t,\tau,\eta)])] = H[s,G[s,y(s,\tau,\eta)]].$$
Therefore, we obtain
$$\omega(t) \leq K \gamma  \int_0^t e^{-\alpha(t-s)} \omega(s) \, ds \leq \frac{K \gamma}{\alpha} \displaystyle \sup_{s \in \mathbb{R}^+} \{\omega(s)\} \quad \rm{for \,\, all} \quad t \geq 0.$$

The supremum is well defined by property (i) and the fact that all the solutions of systems \eqref{scl} and \eqref{scnl} are bounded on $\mathbb{R}^+$. Now, we take the supremum on the left side above and due to $K \gamma / \alpha < 1$ it follows that $\omega(t) = 0$ for any $t \geq 0.$ In particular, when we take $t = \tau$ we obtain  $H(\tau, G(\tau, \eta)) = \eta.$

Next, we will prove that $G(t, H(t, \xi)) = \xi.$ In fact, due to (\ref{conj1}) we have that
$$
\begin{array}{rcl}
G[t,H[t,x(t,\tau,\xi)]] & = & H[t,x(t,\tau,\xi)] \\\\ & &  - \displaystyle \int_0^t \Phi(t,s) f(s,y(s,t,H[t,y(x,\tau,\xi)])) \, ds\\\\
&= &  x(t,\tau, \xi) + \\\\
&& \displaystyle \int_0^t  \Phi(t,s) \{f(s,H[s,x(s,\tau,\xi)]) - f(s,y(s,\tau,H(\tau,\xi)))\} \, ds\\\\
& = &  x(t,\tau, \xi).
\end{array}
$$
and taking $t=\tau$ leads to $G(\tau,H(\tau,\xi))=\xi$. In consequence, for any $t\geq 0$, $H$ is a bijection and $G$ is its inverse.

\noindent\emph{Step 5: $H$ and $G$  are uniformly continuous for any fixed $t$.} Firstly, we prove that $G$ is uniformly  continuous.

As stated in \cite[p.823]{Shi}, it can be proved that $\alpha \leq M$.
Now, we construct the auxiliary functions $\theta,\theta_{0}\colon [0,+\infty)\to [0,+\infty)$ defined by
\begin{displaymath}
\theta(t)=1+ K\gamma \left(\frac{e^{(M+\gamma-\alpha)t}-1}{M+\gamma-\alpha}\right) \quad \textnormal{and} \quad
\theta_{0}(t)=\left\{\begin{array}{lcl}
K\gamma & \textnormal{if} & \alpha=M,\\\\
K\gamma\left(\frac{e^{(M-\alpha)t}-1}{M-\alpha}\right)  &\textnormal{if} & \alpha<M.
\end{array}\right.
\end{displaymath}

Now, given $\varepsilon>0$, let us define the constants
\begin{equation}
\label{auxiliares}
L(\varepsilon)=\frac{1}{\alpha}\ln\left(\frac{4\mu K}{\alpha \varepsilon}\right), \quad \theta_{0}^{*}=\max\limits_{t\in [0,L(\varepsilon)]}\theta_{0}(t) \quad \textnormal{and} \quad \theta^{*}=\max\limits_{t\in [0,L(\varepsilon)]}\theta(t).
\end{equation}

We will prove the uniform continuity of $G$ by considering two cases:

\noindent\emph{Case i)} $t\in [0,L(\varepsilon)]$. By {\bf{(P2)}} and {\bf{(P3)}} we can deduce that
\begin{equation}
\label{estimation}
\begin{array}{rcl}
|G(t,\eta) - G(t,\bar{\eta})|& \leq &  |\eta - \bar{\eta} |+  \gamma K e^{-\alpha t}  \displaystyle \int_0^t e^{\alpha s} |y(s,t,\eta) - y(s,t,\bar{\eta})| \, ds.
\end{array}
\end{equation}

Now, by {\bf{(P1)}},{\bf{(P3)}}  and Gronwall's Lemma we obtain that
\begin{equation}
\label{ContCondIni}
\begin{array}{rcl}
 |y(s,t,\eta) - y(s,t,\bar{\eta})| & = & |\eta - \bar{\eta}| + \displaystyle \int_s^t |A(\tau)| |y(\tau, t, \eta) - y(\tau, t, \bar{\eta})| \, d \tau\\\\
 && + \displaystyle \int_s^t |f(\tau, y(\tau, t, \eta)) - f(\tau, y(\tau, t, \bar{\eta}))| \, d \\\\
 &\leq & |\eta - \bar{\eta}| +(M + \gamma) \displaystyle \int_s^t |y(\tau, t, \eta) - y(\tau, t, \bar{\eta})| \, d \tau\\\\
 &\leq&  |\eta - \bar{\eta}| e^{(M+\gamma)(t-s)},
\end{array}
\end{equation}
and we emphasize that this inequality is valid for any $t\geq 0$.

Upon inserting (\ref{ContCondIni}) in (\ref{estimation}), we obtain that
$$
\begin{array}{rcl}
|G(t,\eta) - G(t,\bar{\eta})| & \leq&  \left (1 +  K\gamma e^{(M + \gamma-\alpha) t} \displaystyle \int_0^t e^{-(M+\gamma-\alpha) s} \right )|\eta - \bar{\eta} | \\\\
& = & \displaystyle    \left (1 + K\gamma \left\{\frac{e^{(M+ \gamma-\alpha)t}-1}{M + \gamma-\alpha}\right\} \right )|\eta - \bar{\eta} |\\\\
& \leq & \theta(t)|\eta - \bar{\eta}| \leq \theta^{*}|\eta - \bar{\eta} |.
\end{array}
$$

\noindent\emph{Case ii)} $t>L(\varepsilon)$. By {\bf{(P1)}}--{\bf{(P3)}}, we have that
\begin{equation}
\label{estimacion2}
\begin{array}{rcl}
|G(t,\eta)-G(t,\bar{\eta})| & \leq & \displaystyle |\eta-\bar{\eta}|+2\mu K\int_{0}^{t-L}e^{-\alpha(t-s)}\,ds \\\\
&&\displaystyle +K\gamma\int_{t-L}^{t}e^{-\alpha(t-s)}|y(s,t,\eta)-y(s,t,\bar{\eta})|\,ds\\\\
& = & \displaystyle |\eta-\bar{\eta}|+\frac{2\mu K}{\alpha}e^{-\alpha L}\\\\
&& \displaystyle + K\gamma \int_{0}^{L}e^{-\alpha u}|y(t-u,t,\eta)-y(t-u,t,\bar{\eta})|\,du.
\end{array}
\end{equation}

As in case i), the inequality (\ref{ContCondIni})
implies that
\begin{displaymath}
\begin{array}{rcl}
\displaystyle K\gamma \int_{0}^{L}e^{-\alpha u}|y(t-u,t,\eta)-y(t-u,t,\bar{\eta})|\,du
& \leq & \displaystyle K\gamma \int_{0}^{L}e^{(M+\gamma-\alpha)u}|\eta-\bar{\eta}|\,du\\\\
& = & \displaystyle K\gamma \left\{\frac{e^{(M+\gamma-\alpha)L}-1}{M+\gamma-\alpha}\right\}|\eta-\bar{\eta}|.
\end{array}
\end{displaymath}

Upon inserting the above inequality in (\ref{estimacion2}) and using (\ref{auxiliares}), we have that
\begin{displaymath}
\begin{array}{rcl}
|G(t,\eta)-G(t,\bar{\eta})| &\leq & \displaystyle \left(1+ K\gamma \left\{\frac{e^{(M+\gamma-\alpha)L}-1}{M+\gamma-\alpha}\right\}\right)|\eta-\bar{\eta}|+\frac{2\mu K}{\alpha}e^{-\alpha L}\\\\
&\leq & \displaystyle \theta^{*}|\eta-\bar{\eta}|+\frac{\varepsilon}{2}.
\end{array}
\end{displaymath}

Summarizing, given $\varepsilon>0$, there exists $L(\varepsilon)>0$ and $\theta^{*}>0$ such that:
\begin{displaymath}
|G(t,\eta)-G(t,\bar{\eta})|\leq \left\{\begin{array}{lcl}
\displaystyle \theta^{*}|\eta-\bar{\eta}| &\textnormal{if}& t\in [0,L]\\\\
\displaystyle \theta^{*}|\eta-\bar{\eta}|+\frac{\varepsilon}{2} &\textnormal{if}& t>L,
\end{array}\right.
\end{displaymath}
then it follows that
$$
\forall \varepsilon>0 \,\exists
\delta(\varepsilon)=\frac{\varepsilon}{2\theta^{*}}\quad \textnormal{such that} \quad
|\eta-\bar{\eta}|<\delta \Rightarrow |G(t,\eta)-G(t,\bar{\eta})|<\varepsilon
$$
and the uniform continuity of $G$ follows.

Finally, we will prove that $H$ is uniformly continuous for any $t\geq 0$. As the identity is uniformly continuous, we will only prove that $\xi \mapsto z^{*}(t;(t,\xi))$ is uniformly continuous.

Note that the fixed point $z^{*}(t;(t,\xi))$ can be seen as the uniform limit on $\mathbb{R}^{+}$ of a sequence $z_{j}^{*}(t;(t,\xi))$ defined recursively as follows:
\begin{displaymath}
\left\{\begin{array}{rcl}
z_{j+1}^{*}(t;(t,\xi))&=& \displaystyle \int_{0}^{t}\Phi(t,s)f(s,x(s,t,\xi)+z_{j}^{*}(s;(t,\xi))) \, ds \quad \textnormal{for any $j\geq 1$},\\\\
z_{0}^{*}(t;(t,\xi)) &=&     0
\end{array}\right.
\end{displaymath}

The uniform continuity of each map $\xi \mapsto z_{j}^{*}(t;(t,\xi))$ will be proved inductively by following the lines of \cite{Jiang, Shi}. First, it is clear that
$\xi\mapsto z_{0}^{*}(t;(t,\xi))$ verify this property. Secondly, we will assume that the inductive hipothesis
\begin{displaymath}
\forall\varepsilon>0\,\exists\delta_{j}(\varepsilon)>0 \,\,\textnormal{s.t.} \, |\xi-\bar{\xi}|<\delta_{j} \Rightarrow |z_{j}^{*}(t;(t,\xi))-z_{j}^{*}(t;(t,\bar{\xi}))|<\varepsilon \quad \textnormal{for any $t\geq 0$}.
\end{displaymath}

For the step $j+1$ and given $\varepsilon>0$, we will only consider $\alpha<M$ since the case $\alpha=M$ can be carried out easily. We will use the constants $L(\varepsilon)$ and $\theta_{0}^{*}$ defined in (\ref{auxiliares}) and introduce the notation
$$
\Delta_{j}(t,\xi,\bar{\xi})=z_{j}^{*}(t;(t,\xi))-z_{j}^{*}(t;(t,\bar{\xi})).
$$

As before, we will distinguish the cases $t\in [0,L(\varepsilon)]$ and $t>L(\varepsilon)$. First, for $t\in [0,L(\varepsilon)]$ we use {\bf{(P1)}} combined with the estimation
\begin{equation}
\label{linealCI}
|x(s,t,\xi) - x(s,t,\bar{\xi})| \leq |\xi - \bar{\xi}| e^{M|t-s|},
\end{equation}
and we can
verify that
\begin{displaymath}
\begin{array}{rcl}
|\Delta_{j+1}(t,\xi,\bar{\xi})| &\leq & \displaystyle K\gamma e^{-\alpha t}
\int_{0}^{t}e^{\alpha s}\{|x(s,t,\xi)-x(s,t,\bar{\xi})|+|\Delta_{j}(s,\xi,\bar{\xi})|\}\,ds \\\\
&\leq & \displaystyle K\gamma e^{-\alpha t}
\int_{0}^{t}e^{\alpha s}\{|\xi-\bar{\xi}|e^{M(t-s)}+||\Delta_{j}(\cdot,\xi,\bar{\xi})||_{\infty}\}\,ds\\\\
&\leq & \displaystyle  K\gamma\left\{\frac{e^{(M-\alpha)t}-1}{M-\alpha}\right\}|\xi-\bar{\xi}|+\frac{K\gamma}{\alpha}||\Delta_{j}(\cdot,\xi,\bar{\xi})||_{\infty}\\\\
&\leq& \displaystyle \theta_{0}^{*}|\xi-\bar{\xi}|+\frac{K\gamma}{\alpha}||\Delta_{j}(\cdot,\xi,\bar{\xi})||_{\infty},
\end{array}
\end{displaymath}
where $||\Delta_{j}(\cdot,\xi,\bar{\xi})||_{\infty}=\sup\limits_{t\geq 0}|\Delta(t,\xi,\bar{\xi})|$.

On the other hand, when $t>L(\varepsilon)$, we use \textbf{(P2)} combined with the boundedness of $f$ in $[0,t-L]$ and Lipschitzness in $[t-L,t)$ to deduce that \begin{displaymath}
\begin{array}{rcl}
|\Delta_{j+1}(t,\xi,\bar{\xi})|
&\leq & \displaystyle 2K\mu\int_{0}^{t-L}e^{-\alpha(t-s)}\,ds \\\\
&&\displaystyle + K\gamma
\int_{t-L}^{t}e^{-\alpha(t-s) }\{|x(s,t,\xi)-x(s,t,\bar{\xi})|+|\Delta_{j}(s,\xi,\bar{\xi})|\}\,ds.
\end{array}
\end{displaymath}

By \textbf{(P1)} combined with $u=t-s$, (\ref{auxiliares}) and (\ref{linealCI}), we have that
\begin{displaymath}
\begin{array}{rcl}
|\Delta_{j+1}(t,\xi,\bar{\xi})| &\leq& \displaystyle \frac{2K\mu}{\alpha}e^{-\alpha L}
+\frac{K\gamma}{\alpha}||\Delta_{j}(\cdot,\xi,\bar{\xi})||_{\infty} \\\\
&&
\displaystyle +K\gamma\int_{0}^{L}e^{-\alpha u }|x(t-u,t,\xi)-x(t-u,t,\bar{\xi})|\,du \\\\
&\leq &\displaystyle \frac{2K\mu}{\alpha}e^{-\alpha L}+\frac{K\gamma}{\alpha}||\Delta_{j}(\cdot,\xi,\bar{\xi})||_{\infty} +K\gamma |\xi-\bar{\xi}|\int_{0}^{L}e^{(M-\alpha) u }\,du \\\\
&\leq& \displaystyle
\frac{\varepsilon}{2}+\frac{K\gamma}{\alpha}||\Delta_{j}(\cdot,\xi,\bar{\xi})||_{\infty}+K\gamma \left\{\frac{e^{(M-\alpha)L}-1}{M-\alpha}\right\}|\xi-\bar{\xi}|\\\\
&\leq& \displaystyle
\frac{\varepsilon}{2}+\frac{K\gamma}{\alpha}||\Delta_{j}(\cdot,\xi,\bar{\xi})||_{\infty} + \theta_{0}^{*}|\xi-\bar{\xi}|.
\end{array}
\end{displaymath}

Summarizing, for any $t\geq 0$ it follows that
\begin{displaymath}
|\Delta_{j+1}(t,\xi,\bar{\xi})|\leq
\left\{\begin{array}{lcl}
\displaystyle
 \theta_{0}^{*}|\xi-\bar{\xi}|+\frac{K\gamma}{\alpha} ||\Delta_{j}(\cdot,\xi,\bar{\xi})||_{\infty}& \textnormal{if}& t\in [0,L]\\\\ \displaystyle\frac{\varepsilon}{2}+\frac{K\gamma}{\alpha}||\Delta_{j}(\cdot,\xi,\bar{\xi})||_{\infty} + \theta_{0}^{*}|\xi-\bar{\xi}| &\textnormal{if}& t>L.
\end{array}\right.
\end{displaymath}

Now, for any $\varepsilon>0$ there exists $L(\varepsilon)>0$, $\theta_{0}^{*}>0$ and
\begin{displaymath}
\delta_{j+1}(\varepsilon)=
\min\left\{\delta_{j}(\varepsilon),\frac{\varepsilon}{2\theta_{0}^{*}}\left(1-\frac{K\gamma}{\alpha}\right)\right\}
\end{displaymath}
such that for any $t\geq 0$, we have
\begin{displaymath}
\forall\varepsilon>0\,\exists\delta_{j+1}(\varepsilon)>0 \,\,\textnormal{s.t.} \, |\xi-\bar{\xi}|<\delta_{j+1} \Rightarrow |z_{j+1}^{*}(t;(t,\xi))-z_{j+1}^{*}(t;(t,\bar{\xi}))|<\varepsilon.
\end{displaymath}
and the uniform continuity of $\xi \mapsto z_{j}^{*}(t;(t,\xi)) $ follows for any $j\in \mathbb{N}$.

In order to finish our proof, we choose $N\in \mathbb{N}$ such that for any $j>N$ it follows that
\begin{displaymath}
||z^{*}(\cdot;(\cdot,\xi))-z_{j}^{*}(\cdot;(\cdot,\xi))||_{\infty}<\varepsilon \quad \textnormal{for any} \quad  \xi\in \mathbb{R}^{n},
\end{displaymath}
and therefore, if $|\xi-\bar{\xi}|<\delta_{j}$ with $j>N$, it is true that
\begin{displaymath}
\begin{array}{rcl}
|z^{*}(t;(t,\xi))-z^{*}(t;(t,\bar{\xi}))|&\leq & |z^{*}(t;(t,\xi))-z_{j}^{*}(t;(t,\xi))|+\Delta_{j}(t,\xi,\bar{\xi})\\\\
&&+|z^{*}(t;(t,\bar{\xi}))-z_{j}^{*}(t;(t,\bar{\xi}))|<3\varepsilon,
\end{array}
\end{displaymath}
and the uniform continuity of $\xi\mapsto z^{*}(t;(t,\xi))$ and $\xi\mapsto H(t,\xi)$ follows for any fixed $t\geq 0$.
\end{proof}



\begin{remark}
As we stated in the introduction, the construction of the homeomorphisms $H$
and $G$ and its uniform continuity is inspired in the Palmer \cite{Palmer} and Shi \emph{et.al.} works respectively \cite{Shi}. Nevertheless our restricction
to $\mathbb{R}^{+}$ induces some technical difficulties, for example:

 i) We have not the uniqueness of bounded solutions of \textnormal{(\ref{pivote2})} and (\ref{pivote1}), this fact prompted us to consider the specific couple of initial conditions that allows the mapping between solutions of \textnormal{(\ref{scl})}  and \textnormal{(\ref{scnl})}.

 ii) The proof of the uniform continuity is based in two facts: the continuity of any solution of \textnormal{(\ref{scl})--(\ref{scnl})} with respect to the initial conditions in a compact interval $[0,L]$ and the smallness of the homeomorphisms on $[L,+\infty[$ when $L$ is big enough. This last condition is easily verified when we have an exponential dichotomy on $\mathbb{R}$ but more technical work is needed when we consider the restriction to $\mathbb{R}^{+}$.
\end{remark}

\begin{corollary}
The systems \textnormal{(\ref{scl})--(\ref{scnl})} are $\mathbb R^+-$to\-po\-lo\-gically equivalent under the a\--ssumptions of Theorem \ref{teorema1}.
\end{corollary}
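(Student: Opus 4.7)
The plan is essentially to observe that this corollary is an immediate logical consequence of Theorem \ref{teorema1}, because Definition \ref{Stopeq} of $\mathbb{R}^+$-strongly topological equivalence is, by construction, strictly stronger than Definition \ref{TopEq} of $\mathbb{R}^+$-topological equivalence: it requires everything in Definition \ref{TopEq} \emph{and} an extra uniform modulus of continuity for $H$ and $G$. So there is nothing genuinely new to prove.

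Concretely, I would proceed as follows. First, invoke Theorem \ref{teorema1} under the hypotheses \textbf{(P1)}--\textbf{(P3)} and \eqref{hipotesis} to obtain a map $H\colon \mathbb{R}^{+}\times \mathbb{R}^{n}\to \mathbb{R}^{n}$ that realizes the $\mathbb{R}^{+}$-strongly topological equivalence between \eqref{scl} and \eqref{scnl}. Second, unpack Definition \ref{Stopeq}: this definition says precisely that \eqref{scl} and \eqref{scnl} are $\mathbb{R}^{+}$-topologically equivalent (so properties (i)--(iii) of Definition \ref{TopEq} hold for $H$, and the analogous properties hold for $G = H^{-1}$) with the additional property that $H$ and $G$ admit a common uniform modulus of continuity in the space variable. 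Third, simply discard the uniformity clause: the remaining conditions are exactly those of Definition \ref{TopEq}, so $H$ witnesses the $\mathbb{R}^{+}$-topological equivalence of \eqref{scl} and \eqml{scnl}.

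There is no real obstacle here; the corollary amounts to the implication ``strongly topologically equivalent $\Rightarrow$ topologically equivalent'', which holds by definition. The only thing to be careful about is not to reprove anything: one should just cite Theorem \ref{teorema1} and Definitions \ref{TopEq}--\ref{Stopeq} and conclude in one or two lines.
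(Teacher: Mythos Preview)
Your argument is correct: $\mathbb{R}^{+}$-strong topological equivalence is by Definition~\ref{Stopeq} strictly stronger than $\mathbb{R}^{+}$-topological equivalence, so the corollary follows immediately from Theorem~\ref{teorema1}. (Mind the typo \verb|\eqml{scnl}|, which should be \verb|\eqref{scnl}|.)

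The paper, however, does not argue this way. Its proof re-runs Theorem~\ref{teorema1} but replaces the delicate two-case argument of Step~5 by the single explicit estimate
\[
|G(t,\eta)-G(t,\bar{\eta})|\leq C(t)\,|\eta-\bar{\eta}|,\qquad
C(t)=
\begin{cases}
1+K\gamma\,\dfrac{1-e^{(-\alpha+M+\gamma)t}}{\alpha-M-\gamma} & \textnormal{if } \alpha\neq M+\gamma,\\[1.5ex]
1+K\gamma\, t & \textnormal{if } \alpha=M+\gamma,
\end{cases}
\]
obtained directly from (\ref{estimation}) and (\ref{ContCondIni}). This establishes that $\eta\mapsto G(t,\eta)$ is Lipschitz (hence continuous) for each fixed $t$, which is all that Definition~\ref{TopEq}(iii) requires. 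Your route is shorter and purely logical; the paper's route is self-contained (it does not rely on the harder uniform-continuity step already having been proved) and, as a bonus, yields a quantitative $t$-dependent Lipschitz constant for $G$.
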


\begin{proof}
The proof is the same of the Theorem. However, in the Step 5, it can be proved that
$$
|G(t,\eta)-G(t,\bar{\eta})|\leq C(t)|\eta-\bar{\eta}|
$$
where
$$
C(t)=\left \{ \begin{array}{rcl}1 + K \gamma \,\, \frac{ 1- e^{(-\alpha+M + \gamma) t}}{\alpha- M - \gamma} \, \, & \textnormal{if}& \, \, \alpha \neq M + \gamma\\\\
1 + K\gamma t \, \, & \textnormal{if}& \, \, \alpha = M + \gamma,
\end{array}
\right.
$$ and the $\mathbb{R}^{+}$--topologically equivalence follows.
\end{proof}

\section{Consequences of the Topological Equivalence  and Stability issues}


It is known that $\bar{y}$ is an equilibrium of (\ref{scnl}) if
\begin{equation}
\label{equilibrio}
A(t)\bar{y}+f(t,\bar{y})=0 \quad \textnormal{for any $t\geq 0$}.
\end{equation}

On the other hand, it is important to emphasize
that the $\mathbb{R}^{+}$--strong topological equivalence between (\ref{scl}) and (\ref{scnl}) does not necessarily imply the existence of an equilibrium for (\ref{scnl}). Indeed, we adapt the example introduced by L. Jiang
in \cite[p.487]{Jiang}
\begin{displaymath}
y'=-y+\frac{1}{5}\left(\frac{\pi}{2}-\arctan(|t|+|y|)\right).
\end{displaymath}

It is easy to see that \textbf{(P1)--(P3)}
are satisfied with $K=M=\alpha=1$, $\gamma=1/5$ and $\mu=\pi/5$. We can see that for any $t_{0}\geq 0$
there exists $y(t_{0})$ such that
$f(t_{0},y(t_{0}))=0$, however the above equation has no equilibria in the sense of (\ref{equilibrio}).

The next results show some properties of the equilibria of the system (\ref{scnl}) when them exist.

\begin{lemma}
\label{UPF}
Assume that \textnormal{\textbf{(P1)--(P3)}}  and condition \eqref{hipotesis} are fulfilled. If \textnormal{(\ref{scnl})} has an equilibrium then it is unique.
\end{lemma}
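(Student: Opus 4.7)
The plan is to argue by contradiction, assuming there are two distinct equilibria $\bar{y}_1 \neq \bar{y}_2$ of \eqref{scnl}, and to derive a contradiction by pushing them through the homeomorphism $G$ built in Theorem \ref{teorema1}, exploiting the uniform asymptotic stability of \eqref{scl} together with the uniform continuity of $H$.

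First I would observe that under the standing assumptions, Theorem \ref{teorema1} applies, so \eqref{scl} and \eqref{scnl} are $\mathbb{R}^+$--strongly topologically equivalent with $H$ and $G$ as constructed, and in particular the conjugacy relation \eqref{conj2} holds: $G[t,y(t,\tau,\eta)]=\Phi(t,\tau)G(\tau,\eta)$. If $\bar{y}_i$ ($i=1,2$) are equilibria, then the solution through $\bar{y}_i$ is constant, i.e. $y(t,0,\bar{y}_i)=\bar{y}_i$ for every $t\geq 0$. Applying \eqref{conj2} with $\tau=0$ and $\eta=\bar{y}_i$ yields
\[
G(t,\bar{y}_i)=\Phi(t,0)\,G(0,\bar{y}_i)\qquad\text{for every }t\geq 0,
\]
so $t\mapsto G(t,\bar{y}_i)$ is a solution of the linear system \eqref{scl}.

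Next I would use property \textbf{(P2)}: the uniform estimate $\|\Phi(t,0)\|\leq K e^{-\alpha t}$ forces $G(t,\bar{y}_i)\to 0$ as $t\to+\infty$, and hence
\[
\bigl|G(t,\bar{y}_1)-G(t,\bar{y}_2)\bigr|\leq K e^{-\alpha t}\bigl|G(0,\bar{y}_1)-G(0,\bar{y}_2)\bigr|\xrightarrow[t\to\infty]{}0.
\]
The final step invokes the uniform continuity of $H$ granted by Theorem \ref{teorema1} (Definition \ref{Stopeq}): for every $\varepsilon>0$ there exists $\delta>0$, independent of $t$, with $|u-\tilde u|<\delta \Rightarrow |H(t,u)-H(t,\tilde u)|<\varepsilon$. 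Choosing $\varepsilon=\tfrac{1}{2}|\bar{y}_1-\bar{y}_2|$ and then $t$ large enough so that $|G(t,\bar{y}_1)-G(t,\bar{y}_2)|<\delta$, the identity $H(t,G(t,\bar{y}_i))=\bar{y}_i$ yields
\[
|\bar{y}_1-\bar{y}_2|=\bigl|H(t,G(t,\bar{y}_1))-H(t,G(t,\bar{y}_2))\bigr|<\tfrac{1}{2}|\bar{y}_1-\bar{y}_2|,
\]
a contradiction, so $\bar{y}_1=\bar{y}_2$.

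The only subtle point, and what I expect to be the main obstacle to underline, is that one genuinely needs the \emph{uniform} (in $t$) continuity of $H$ rather than mere continuity at each fixed $t$: the contradiction is extracted at a time $t$ that must be taken arbitrarily large, so the modulus of continuity $\delta$ cannot depend on $t$. This is precisely the reason why strong topological equivalence (rather than plain topological equivalence) is the relevant notion here, and it is what makes Theorem \ref{teorema1} rather than its corollary the right tool.
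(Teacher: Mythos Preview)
Your argument is correct, but it follows a genuinely different route from the paper. The paper gives a short, self-contained proof that does not invoke Theorem~\ref{teorema1} at all: it characterizes an equilibrium $u$ by the identity $u=\Phi(t,0)u+\int_{0}^{t}\Phi(t,s)f(s,u)\,ds$ for every $t\ge 0$, subtracts the corresponding identities for two putative equilibria $u\neq v$, takes norms using \textbf{(P2)}--\textbf{(P3)}, and lets $t\to\infty$ to reach $1\le K\gamma/\alpha$, contradicting \eqref{hipotesis}. Your proof instead leverages the full strength of Theorem~\ref{teorema1}: the conjugacy \eqref{conj2} sends each equilibrium to a trajectory of \eqref{scl}, \textbf{(P2)} collapses all such trajectories to $0$, and the \emph{uniform} continuity of $H$ pulls this back to force $\bar y_1=\bar y_2$. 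The paper's approach is more elementary and independent of the construction of $H$ and $G$ (so the lemma could in principle be stated before Theorem~\ref{teorema1}); your approach is more structural and nicely exhibits uniqueness of equilibria as a direct dynamical consequence of strong topological equivalence, at the cost of relying on the considerably longer proof of Theorem~\ref{teorema1}. Your closing remark is well taken: it is precisely the $t$-uniform modulus of continuity of $H$ that makes your argument go through.
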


\begin{proof}
Firstly, notice that
$u\in \mathbb{R}^{n}$ is an equilibrium of (\ref{scnl}) if and only if
\begin{equation}
\label{PFE}
u=\Phi(t,0)u+\int_{0}^{t}\Phi(t,s)f(s,u)\,ds \quad \textnormal{for any $t\geq 0$}.
\end{equation}

In fact, if $u\in \mathbb{R}^{n}$ is an equilibrium of (\ref{scnl}), it follows that $y(t,0,u)=u$ for any $t\geq 0$. Then, by variation of parameters formula, it follows that
$$
u=y(t,0,u)=\Phi(t,0)u+ \int_{0}^{t}\Phi(t,s)f(s,u)\,ds.
$$

Now, if $u\in \mathbb{R}^{n}$ satisfies (\ref{PFE}), we derivate with respect to $t$ obtaining (\ref{equilibrio}).

In order to proof the uniqueness of the equilibrium, we will assume that $u$ and $v$ with $u\neq v$ are equilibria of (\ref{scnl}). Then by (\ref{PFE}) and
defining $w=u-v$ we have
$$
w=\Phi(t,0)w+ \int_{0}^{t}\Phi(t,s)[f(s,u)-f(s,v)]\,ds.
$$

Taking norm and considering \textbf{(P2)--(P3)}, we obtain
$$
|w| \leq Ke^{-\alpha t} |w| + K\gamma e^{-\alpha t} \int_{0}^{t} e^{\alpha s} |w|\,ds.
$$
for every $t\geq 0$. As $|w|>0$ we can deduce that
$$
1 \leq  Ke^{-\alpha t} + \frac{K\gamma}{\alpha} (1-e^{-\alpha t})\quad,\quad t\geq 0.
$$

Then, for arbitrarily large values of $t$, it follows that $1\leq  K\gamma/\alpha$ obtaining a contradiction with (\ref{hipotesis}).
\end{proof}

\begin{remark}
\label{ball}
A simple consequence of Lemma \textnormal{\ref{UPF}} and \textnormal{(\ref{PFE})} is that any equilibrium of \textnormal{(\ref{scnl})} must be in a closed ball centered at the origin with radius $K\mu/\alpha$.
\end{remark}

\begin{remark}
\label{asintotico-0}
If $\bar{y}$ is an equilibrium of \textnormal{(\ref{scnl})} we also can prove that
\begin{displaymath}
\bar{y}=\Phi(t,t_{0})\bar{y}+\int_{t_{0}}^{t}\Phi(t,s)f(s,\bar{y})\,ds \quad \textnormal{for any} \quad t\geq t_{0}\geq 0.
\end{displaymath}
\end{remark}

An interesting question is to determine that if the uniform asymptotical stability of the origin is preserved by the homeomorphism $H(t,\cdot)$ when the equilibrium $\bar{y}$ of (\ref{scnl}) exists.

The following result relates the equilibria of (\ref{scl}) and (\ref{scnl}) with the homeomorphisms  $H(t,\cdot)$ and $G(t,\cdot)$.

\begin{lemma}
\label{HPF}
Assume that \textnormal{\textbf{(P1)--(P3)}} and \textnormal{(\ref{hipotesis})} are fulfilled.
\begin{itemize}
\item[(i)] If $\bar{y}=0$ is equilibrium of \textnormal{(\ref{scnl})}, namely $f(t,0)=0$ for any $t\geq 0$, then
$$
H(t,0)=G(t,0)=0 \quad \textnormal{for any} \quad t\geq 0.
$$
\item[(ii)]
If the system \textnormal{(\ref{scnl})} has a equilibrium  $\bar{y}\neq 0$, then
\begin{displaymath}
\lim\limits_{t\to +\infty}H(t,0)=\bar{y} \quad
\textnormal{and} \quad\lim\limits_{t\to +\infty}G(t,\bar{y})=0.
\end{displaymath}
\end{itemize}
\end{lemma}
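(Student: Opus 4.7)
The plan is to exploit the explicit formulas for $H$ and $G$ from Theorem \ref{teorema1} together with the fixed-point identity from Remark \ref{asintotico-0}.

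For part (i), I would argue as follows. If $f(t,0)=0$ for all $t\geq 0$, then $y\equiv 0$ is a solution of (\ref{scnl}), so $y(s,t,0)=0$ for every $s\geq 0$. Substituting into the formula (\ref{Homeo-G}) for $G$ immediately gives $G(t,0)=0$ for all $t\geq 0$. For $H$, observe that $x(s,t,0)\equiv 0$ as the unique solution of (\ref{scl}) with null initial datum, so the integrand in the fixed-point equation defining $z^{*}(\cdot;(t,0))$ becomes $f(s,z^{*}(s;(t,0)))$, and the zero function clearly satisfies this equation. By uniqueness of the fixed point, $z^{*}(t;(t,0))=0$, hence $H(t,0)=0$. (Alternatively, one may invoke the identity $H(t,G(t,0))=0$ proved in Step 4.)

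For part (ii), the key observation is that $\bar{y}$ being an equilibrium of (\ref{scnl}) yields $y(s,t,\bar{y})=\bar{y}$ for all $s,t\geq 0$. Plugging this into (\ref{Homeo-G}) and using Remark \ref{asintotico-0} with $t_{0}=0$, which gives
$$\int_{0}^{t}\Phi(t,s)f(s,\bar{y})\,ds=\bar{y}-\Phi(t,0)\bar{y},$$
one obtains the clean identity
$$G(t,\bar{y})=\bar{y}-\bigl(\bar{y}-\Phi(t,0)\bar{y}\bigr)=\Phi(t,0)\bar{y}.$$
The uniform asymptotic stability estimate (\ref{UAS}) then yields $|G(t,\bar{y})|\leq Ke^{-\alpha t}|\bar{y}|\to 0$ as $t\to+\infty$.

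To derive $\lim_{t\to+\infty}H(t,0)=\bar{y}$, I would combine the identity $H(t,G(t,\bar{y}))=\bar{y}$ established in Step 4 of Theorem \ref{teorema1} with the uniform continuity of $H$ provided by the $\mathbb{R}^{+}$--strong topological equivalence (this is exactly Definition \ref{Stopeq}). Since $G(t,\bar{y})=\Phi(t,0)\bar{y}\to 0$ uniformly as $t\to+\infty$ and the modulus of continuity of $H$ is independent of $t$, we get
$$|H(t,0)-\bar{y}|=|H(t,0)-H(t,G(t,\bar{y}))|\to 0.$$
I expect no serious obstacle here; the only conceptual point is to make visible that strong topological equivalence (uniformity of the modulus in $t$) is precisely what is needed to pass the limit inside $H$. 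All other ingredients are algebraic manipulations on the explicit integral formulas.
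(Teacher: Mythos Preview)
Your argument is correct; for part (i) and for the limit $G(t,\bar y)\to 0$ in part (ii) it coincides with the paper's proof (you invoke Remark~\ref{asintotico-0} where the paper computes the same integral via $\Phi(t,s)A(s)=-\partial_s\Phi(t,s)$, which is equivalent). The genuine difference is in the proof that $H(t,0)\to\bar y$. The paper works directly with the integral equation: since $H(t,0)=z^{*}(t)$ solves $z^{*}(t)=\int_0^t\Phi(t,s)f(s,z^{*}(s))\,ds$ and $\bar y$ satisfies $\bar y=\Phi(t,0)\bar y+\int_0^t\Phi(t,s)f(s,\bar y)\,ds$, subtracting and applying \textbf{(P2)}, \textbf{(P3)} and Gronwall yields the explicit exponential bound $|H(t,0)-\bar y|\le K|\bar y|e^{-\alpha t}+e^{(K\gamma-\alpha)t}$, which decays by \eqref{hipotesis}. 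Your route instead combines the identity $H(t,G(t,\bar y))=\bar y$ with the uniform-in-$t$ modulus of continuity of $H$ supplied by the $\mathbb R^{+}$--strong topological equivalence. This is a legitimate and somewhat more conceptual argument---it makes transparent that the ``strong'' part of Definition~\ref{Stopeq} is exactly what lets the limit pass through $H$---but it gives no rate of convergence, whereas the paper's Gronwall computation does.
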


\begin{proof}
If $f(t,0)=0$ for any $t\geq 0$ then the system (\ref{pivote2}) becomes (\ref{scl}) and $w^{*}(t;(\tau,0))=0$ for any $t\geq 0$ and by the definition of $G(t,\cdot)$, we have that $G(t,0)=0$ and (i) follows.

Now, let us assume that $\bar{y}\neq 0$ is the unique equilibrium of (\ref{scnl}). Then, the initial value problem (\ref{pivote2}) becomes
\begin{displaymath}
\left\{\begin{array}{rcl}
w'&=&A(t)w-f(t,\bar{y})\\
w(0)&=& 0,
\end{array}\right.
\end{displaymath}
whose solution is given by
\begin{displaymath}
\begin{array}{rcl}
w^{*}(t;(\tau,\bar{y})) &=& \displaystyle         -\int_{0}^{t}\Phi(t,s)f(s,\bar{y})\,ds \\\\
 &=&\displaystyle \int_{0}^{t}\Phi(t,s)A(s)\bar{y}\,ds\\\\
 &=&\displaystyle  -\int_{0}^{t}\frac{\partial}{\partial s}\Phi(t,s)\bar{y}\,ds \\\\
 &=& (\Phi(t,0)-I)\bar{y},
\end{array}
\end{displaymath}
and by using definition of $G(t,\cdot)$ we obtain $G(t,\bar{y})=\Phi(t,0)\bar{y}$ and it follows by \textbf{(P2)} that $\lim\limits_{t\to +\infty}G(t,\bar{y})=0$.

Similarly, if $\xi=0$, the initial value problem (\ref{pivote1}) becomes
\begin{displaymath}
\left\{\begin{array}{rcl}
z'&=&A(t)z+f(t,z)\\
z(0)&=& 0,
\end{array}\right.
\end{displaymath}
which is not parameter dependent and its solution is
\begin{displaymath}
z^{*}(t)=\int_{0}^{t}\Phi(t,s)f(s,z^{*}(s))\,ds.
\end{displaymath}

By the definition, we know that $H(t,0)=z^{*}(t)$, and as $\bar{y}$ is a fixed point, we have that
\begin{displaymath}
H(t,0)-\bar{y}  =  \displaystyle  -\Phi(t,0)\bar{y}+\int_{0}^{t}\Phi(t,s)\{f(s,H(s,0))-f(s,\bar{y})\}\,ds,
\end{displaymath}
which implies that
\begin{displaymath}
\begin{array}{rcl}
|H(t,0)-\bar{y}| & \leq  & \displaystyle  Ke^{-\alpha t}|\bar{y}|+\int_{0}^{t}
Ke^{-\alpha(t-s)}\gamma |H(s,0)-\bar{y}|\,ds.
\end{array}
\end{displaymath}

By Gronwall's inequality we can deduce that
\begin{displaymath}
|H(t,0)-\bar{y}|\leq K|\bar{y}|e^{-\alpha t}+e^{(K\gamma -\alpha)t}.
\end{displaymath}

Finally, by using (\ref{hipotesis}) we obtain $\lim\limits_{t\to +\infty}H(t,0)=\bar{y}$ and the result follows.
\end{proof}


\begin{lemma}
\label{pres-UAS}
Assume that \textnormal{\textbf{(P1)--(P3)}} and \textnormal{(\ref{hipotesis})} are satisfied. If $f(t,0)=0$ for any $t\geq 0$, then the origin is a globally uniformly asymptotically stable solution of \textnormal{(\ref{scnl})}.
\end{lemma}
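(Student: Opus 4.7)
The plan is to transfer the uniform asymptotic stability of the origin of the linear system (\ref{scl}) to the origin of (\ref{scnl}) through the homeomorphism furnished by Theorem \ref{teorema1}, rather than attempting a direct Gronwall argument on (\ref{scnl}).

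First I would observe that $f(t,0)=0$ makes $y\equiv 0$ an equilibrium of (\ref{scnl}), and Lemma \ref{HPF}(i) gives $H(t,0)=G(t,0)=0$ for every $t\ge 0$. Combining the conjugacy relation (\ref{conj1}) with the choice $\xi=G(\tau,\eta)$, every solution of (\ref{scnl}) admits the representation
\begin{equation*}
y(t,\tau,\eta)=H\bigl[t,\,\Phi(t,\tau)\,G(\tau,\eta)\bigr],\qquad t\ge \tau\ge 0.
\end{equation*}
This is the engine of the argument: the factor $\Phi(t,\tau)$ decays exponentially by \textbf{(P2)}, the outer map $H$ satisfies $H(t,0)=0$ and is uniformly continuous in the second variable, and similarly for the inner map $G$.

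For uniform stability, given $\varepsilon>0$, I would first use the uniform continuity of $H$ (established in Theorem \ref{teorema1}) at the origin to select $\delta_1(\varepsilon)>0$ such that $|u|<\delta_1$ implies $|H(t,u)|<\varepsilon$ for every $t\ge 0$, and then use the uniform continuity of $G$ at the origin to select $\delta(\varepsilon)>0$ such that $|\eta|<\delta$ implies $|G(\tau,\eta)|<\delta_1/K$ for every $\tau\ge 0$. The dichotomy estimate (\ref{UAS}) then gives $|\Phi(t,\tau)G(\tau,\eta)|\le Ke^{-\alpha(t-\tau)}|G(\tau,\eta)|<\delta_1$ for all $t\ge\tau$, whence $|y(t,\tau,\eta)|<\varepsilon$ uniformly in $\tau$.

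For global uniform attractivity, given $R>0$ and $\varepsilon>0$, I would use the bound $|G(\tau,\eta)-\eta|\le K\mu/\alpha$ from property (ii) (proved in step 3 of Theorem \ref{teorema1}) to get $|G(\tau,\eta)|\le R+K\mu/\alpha$ whenever $|\eta|\le R$, and then combine this with (\ref{UAS}):
\begin{equation*}
|\Phi(t,\tau)G(\tau,\eta)|\le Ke^{-\alpha(t-\tau)}\Bigl(R+\tfrac{K\mu}{\alpha}\Bigr).
\end{equation*}
Choosing $T=T(R,\varepsilon)>0$ large enough that the right-hand side falls below the threshold $\delta_1(\varepsilon)$ fixed above yields $|y(t,\tau,\eta)|<\varepsilon$ for every $t\ge\tau+T$, with $T$ independent of $\tau$. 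This is exactly global uniform asymptotic stability.

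The delicate point (and the reason the strong topological equivalence of Theorem \ref{teorema1} is indispensable rather than mere $\mathbb{R}^+$-topological equivalence) is that the $\tau$-independence of $\delta_1$ and $\delta$ above relies crucially on the \emph{uniform} continuity of $H$ and $G$ at the origin; plain continuity for each fixed $t$ would at best give stability of the origin for each starting time $\tau$, not uniform stability. Once this uniformity is granted, the proof is essentially a direct algebraic substitution.
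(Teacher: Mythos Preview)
Your proof is correct and follows essentially the same strategy as the paper: both arguments exploit the conjugacy $y(t,\tau,\eta)=H[t,\Phi(t,\tau)G(\tau,\eta)]$, the uniform continuity of $H$ at the origin granted by the strong topological equivalence, the bound $|G(\tau,\eta)-\eta|\le K\mu/\alpha$, and the exponential decay of $\Phi$. The paper phrases things from the $\xi$-side (bounding $|\xi|$ in terms of $|H(t_0,\xi)|$) while you work from the $\eta$-side via $G$, but this is only a change of variables. Your treatment is in fact slightly more complete, since you verify uniform stability explicitly whereas the paper's proof records only the global uniform attractivity part.
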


\begin{proof}
We have to proof that
\begin{equation}
\label{uas2}
\left\{\begin{array}{l}
\forall\varepsilon>0  \wedge \forall c>0 \quad \exists T:T(\varepsilon,c)>0 \quad \textnormal{such that} \\\\
|H[t,x(t,t_{0},\xi)]|<\varepsilon \quad \forall t>t_{0}+T \, \wedge \, \forall \, |H(t_{0},\xi)|<c.
\end{array}\right.
\end{equation}

Firstly, given $c>0$ such that $|H(t_{0},\xi)|<c$, we can deduce that
\begin{displaymath}
\begin{array}{rcl}
\displaystyle |\xi |  &<& \displaystyle c+\int_{0}^{t}|\Phi(t_{0},s)f(s,x(s,t_{0},\xi)+z^{*}(s;(t,\xi))|\,ds  \\\\
 &<& \displaystyle c_{0}(c):=c+\frac{K\mu}{\alpha}.
 \end{array}
\end{displaymath}

We consider $c_{0}(c)$ defined above and by Theorem 4.11 from \cite{Khalil} we know that \textbf{(P2)} is equivalent to
\begin{equation}
\label{uas3}
\left\{\begin{array}{l}
\forall\delta>0  \quad \exists T_{0}:T_{0}(\delta,c_{0})>0 \quad \textnormal{such that} \\\\
|x(t,t_{0},\xi)|<\delta \quad \forall t>t_{0}+T_{0} \, \wedge \, |\xi|<c_{0}.
\end{array}\right.
\end{equation}

Secondly, by strongly topological equivalence combined with the fact $H(t,0)=0$ for any $t\geq 0$, we know that
\begin{equation}
\label{ste-ua}
\forall \varepsilon>0\,\,\exists\widehat{\delta}(\varepsilon)>0 \quad \textnormal{s.t.} \quad |x(t,t_{0},\xi)|<\widehat{\delta} \Rightarrow |H[t,x(t,t_{0},\xi)]|<\varepsilon.
\end{equation}

Now, given $\varepsilon>0$ we define
$$
T(\varepsilon,c):=T_{0}(\widehat{\delta}(\varepsilon),c_{0}(c)),
$$
where $\widehat{\delta}(\varepsilon)$ is from (\ref{ste-ua}) while $T_{0}$  is from (\ref{uas3}). By using (\ref{uas3}) and (\ref{ste-ua}) it follows that
\begin{displaymath}
\forall\varepsilon>0 \, \exists T(\varepsilon,c)>0 \quad \textnormal{s.t.} \quad t>t_{0}+T \Rightarrow |H[t,x(t,t_{0},\xi)]|<\varepsilon,
\end{displaymath}
and (\ref{uas2}) is verified.
\end{proof}

\begin{theorem}
\label{uas4}
Assume that \textnormal{\textbf{(P1)--(P3)}} and \textnormal{(\ref{hipotesis})} are satisfied. If \textnormal{(\ref{scnl})} has a unique equilibrium $\bar{y}\neq 0$, then it is  globally uniformly asymptotically stable.
\end{theorem}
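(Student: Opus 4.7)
The plan is to reduce the claim directly to Lemma \ref{pres-UAS} via a translation of coordinates that moves the equilibrium $\bar{y}$ to the origin. Specifically, I would set $z = y - \bar{y}$ and derive the transformed system. Since $\bar{y}$ is an equilibrium, the identity $A(t)\bar{y} + f(t,\bar{y}) = 0$ gives
\begin{equation*}
\dot{z} = A(t)y + f(t,y) = A(t)z + \bigl[f(t, z + \bar{y}) - f(t, \bar{y})\bigr] =: A(t) z + \tilde{f}(t,z).
\end{equation*}
Global uniform asymptotic stability of $\bar{y}$ for \eqref{scnl} is equivalent to global uniform asymptotic stability of the origin for this translated system, so it suffices to verify that Lemma \ref{pres-UAS} applies to the pair $(A(t), \tilde{f})$.

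The second step is to check that the translated system satisfies \textbf{(P1)--(P3)} together with the smallness condition \eqref{hipotesis}. Property \textbf{(P1)} and \textbf{(P2)} are unaffected since $A(t)$ is unchanged. For \textbf{(P3)}, the Lipschitz property transfers with the same constant $\gamma$,
\begin{equation*}
|\tilde{f}(t,z) - \tilde{f}(t,\bar{z})| = |f(t, z+\bar{y}) - f(t, \bar{z}+\bar{y})| \leq \gamma |z - \bar{z}|,
\end{equation*}
while the uniform bound becomes $|\tilde{f}(t,z)| \leq 2\mu$ by the triangle inequality applied to \eqref{perturbacion}. The key inequality \eqref{hipotesis} reads $K\gamma/\alpha < 1$ and depends only on $K, \gamma, \alpha$, none of which have changed, so it continues to hold. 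Crucially, by construction $\tilde{f}(t,0) = 0$ for every $t \geq 0$, which is exactly the hypothesis required by Lemma \ref{pres-UAS}.

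The third step invokes Lemma \ref{pres-UAS} on the translated system to conclude that $z = 0$ is globally uniformly asymptotically stable for $\dot{z} = A(t)z + \tilde{f}(t,z)$. Translating back via $y = z + \bar{y}$, this is precisely the statement that $\bar{y}$ is globally uniformly asymptotically stable for \eqref{scnl}, completing the argument.

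There is no real obstacle here: the entire proof hinges on the fact that the translation preserves the Lipschitz constant $\gamma$ and the dichotomy data $(K,\alpha)$, so that the quantitative condition \eqref{hipotesis} is preserved, while the boundedness constant only doubles (which is irrelevant for the qualitative conclusion in Lemma \ref{pres-UAS}). The only point worth double checking is that Lemma \ref{pres-UAS} does not implicitly use a particular value of $\mu$ in a way that would obstruct the substitution $\mu \mapsto 2\mu$; inspection of its proof shows that $\mu$ enters only through the bound $K\mu/\alpha$ on $|H(t,\xi) - \xi|$ and through the compactness argument producing $c_{0}(c)$, so replacing $\mu$ by $2\mu$ merely rescales constants without affecting the conclusion.
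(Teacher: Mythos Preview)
Your proposal is correct and follows essentially the same route as the paper: translate by $\bar y$, observe that the new nonlinearity $\tilde f(t,z)=f(t,z+\bar y)-f(t,\bar y)$ satisfies \textbf{(P3)} with the same $\gamma$ and bound $2\mu$, and then invoke Lemma~\ref{pres-UAS} (via Theorem~\ref{teorema1}) since $\tilde f(t,0)=0$. Your extra remark that $\mu\mapsto 2\mu$ only rescales constants in the proof of Lemma~\ref{pres-UAS} is a nice sanity check that the paper leaves implicit.
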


\begin{proof}
Let us consider $z=y-\bar{y}$, then $z$ is solution of the system
\begin{equation}
\label{trasladado}
z'=A(t)z+g(t,z)  \quad \textnormal{with} \quad g(t,z)=f(t,z+\bar{y})-f(t,\bar{y}).
\end{equation}

Notice that $g$ satisfies \textbf{(P2)} with constants $\gamma$ and $2\mu$.  Then by Theorem \ref{teorema1}, the systems (\ref{scl}) and (\ref{trasladado}) are $\mathbb{R}^{+}$--strongly topologically equivalent. Finally, as $g(t,0)=0$ for any $t\geq 0$, the result follows from Lemma \ref{pres-UAS}.
\end{proof}



From now on, we assume that $f(t,0)=0$ for any $t\geq 0$ and we will contrast the results of  Theorems \ref{teorema1} and \ref{uas4} with a Lyapunov stability result for the system \eqref{scnl}. As (\ref{scl}) is uniformly asymptotically stable, the next Lemma recalls the existence of a quadratic Lyapunov function having certain bounds (see, \emph{e.g.}, \cite[Ch.4]{Khalil}).

\begin{lemma}[Lyapunov's converse result]
\label{lemma-forma_cuadratica_scl}
Let us consider the linear system  \eqref{scl} satisfying \textnormal{\textbf{(P1)--(P2)}}. Given a bounded, continuous, positive definite $n\times n$ matrix function $Q(t)$, with constants $q^{-}$ and $q^{+}$ such that
$$
0<q^{-} I\leq Q(t)\leq q^{+}  I.
$$
Then, there exists a bounded, continuous, positive definite $n\times n$ matrix function $P(t)$ such that
\begin{equation}
\label{cotas-lyap}
0<p^{-}I\leq P(t)\leq  p^{+}I\quad\mathrm{and}\quad -\dot P(t)= A^T(t)P(t) + P(t)A(t) + Q(t),
\end{equation}
with constants $p^{-}=q^{-}/2M$ and $p^{+} = K^2 q^{+}/2\alpha$.
\end{lemma}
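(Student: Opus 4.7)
The plan is to build $P(t)$ by the standard converse Lyapunov recipe, namely
\begin{equation*}
P(t) := \int_t^{+\infty} \Phi^T(s,t)\, Q(s)\, \Phi(s,t)\, ds,
\end{equation*}
and then verify each claim in turn. The upper bound is immediate from \textbf{(P2)}: for any vector $v \in \mathbb{R}^n$,
\begin{equation*}
v^T P(t) v \leq q^+ \int_t^{+\infty} \|\Phi(s,t) v\|^2\, ds \leq q^+ K^2 |v|^2 \int_t^{+\infty} e^{-2\alpha(s-t)}\, ds = \frac{K^2 q^+}{2\alpha}\, |v|^2,
\end{equation*}
which also shows the integral converges absolutely and uniformly on compact sets of $t$, hence $P$ is well defined and continuous.

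The non-trivial step is the lower bound, since the exponential estimate \textbf{(P2)} only controls $|\Phi(s,t)v|$ from above. Here I would use \textbf{(P1)}: from $\dot{u} = A(t)u$ with $u(s) = \Phi(s,t)v$ one deduces
\begin{equation*}
\frac{d}{ds} |u(s)|^2 = 2\langle u(s), A(s) u(s)\rangle \geq -2M |u(s)|^2,
\end{equation*}
so by Gronwall $|\Phi(s,t)v|^2 \geq |v|^2 e^{-2M(s-t)}$ for $s\geq t$. Inserting this into the definition of $P$ yields
\begin{equation*}
v^T P(t) v \geq q^- |v|^2 \int_t^{+\infty} e^{-2M(s-t)}\, ds = \frac{q^-}{2M}\, |v|^2,
\end{equation*}
which gives $P(t) \geq p^- I$ with $p^- = q^-/(2M)$, and in particular $P(t)$ is positive definite.

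Finally I would verify the Lyapunov equation by differentiating under the integral sign. Using the identity $\partial_t \Phi(s,t) = -\Phi(s,t) A(t)$ (valid from $\Phi(s,t)\Phi(t,r) = \Phi(s,r)$), one gets
\begin{equation*}
\dot{P}(t) = -Q(t) - A^T(t)\int_t^{+\infty} \Phi^T(s,t) Q(s) \Phi(s,t)\, ds - \left(\int_t^{+\infty} \Phi^T(s,t) Q(s) \Phi(s,t)\, ds\right) A(t),
\end{equation*}
which is precisely $\dot{P}(t) = -A^T(t) P(t) - P(t) A(t) - Q(t)$, i.e.\ \eqref{cotas-lyap}. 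Continuity of $\dot P$, and hence of $P$, follows from continuity of $A$ and $Q$ and uniform convergence of the defining integral on compact sets; symmetry of $P(t)$ is built into the construction. I expect the lower bound step to be the only real point of care, because it is the only part that actually uses \textbf{(P1)}; everything else is essentially bookkeeping.
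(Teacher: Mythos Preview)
Your proof is correct and follows essentially the same route as the paper: the same integral formula for $P(t)$, the same use of \textbf{(P2)} for the upper bound, of \textbf{(P1)} for the lower bound, and differentiation via $\partial_t\Phi(s,t)=-\Phi(s,t)A(t)$ for the Lyapunov equation. If anything, your treatment of the lower bound is more explicit than the paper's, which simply asserts $|x(\tau,t,x_0)|^2\geq e^{-2M(\tau-t)}|x_0|^2$ without spelling out the Gronwall step.
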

\begin{proof}
We define
$$
P(t) = \int_t^{+\infty} X^T(\tau,t)Q(\tau)X(\tau,t)d\tau
$$
Then
\begin{displaymath}
\begin{array}{rcl}
x_0^TP(t)x_0 &=& \displaystyle \int_t^{+\infty} x_0^{T} \, X^{T}(\tau,t) \, Q(\tau) \,X(\tau,t) \,x_0 \, d \tau \\\\
 &\leq & \displaystyle \int_t^{+\infty}  q^{+} \|X(\tau,t)\|^2|x_0|^2\\\\
&\leq&  \displaystyle
 \int_t^{+\infty} q^{+} K^2e^{-2\alpha(\tau-t)}|x_0|^2=\frac{K^2 q^{+}}{2\alpha}|x_0|^2.
 \end{array}
\end{displaymath}

Similarly,
\begin{eqnarray*}
x_0^TP(t)x_0 \geq& \displaystyle \int_t^{+\infty} q^{-} |x(\tau,t,x_0)|^2 \geq
 \int_t^{+\infty} q^{-} e^{-2M(\tau-t)}|x_0|^2=\frac{ q^{-}}{2M}|x_0|^2.
\end{eqnarray*}

By defining $p^{-}=q^{-}/2M$ and $ p^{+} =K^2q^{+}/2\alpha$, the left part of (\ref{cotas-lyap}) is verified.

Finally, in order to obtain the right part of (\ref{cotas-lyap}), we see that
$$
\dot P(t)=\int_t^{+\infty} \frac{\partial }{\partial t}X^T(\tau,t)Q(\tau)\frac{\partial }{\partial t}X(\tau,t)d\tau + \int_t^{+\infty} X^T(\tau,t)Q(\tau)X(\tau,t)d\tau-Q(t),
$$
and using $\frac{\partial X}{\partial t}(\tau,t)=-A(t)X(\tau,t)$, the result follows.
\end{proof}

A classical result of Lyapunov stability theory shows that a quadratic Lyapunov function for a linear system may be also work for a Lipschitz nonlinear perturbation satisfying some smallness properties as follows.

\begin{lemma}\label{lemma-forma_cuadratica_scnl} Suppose that $f(t,0)=0$ for any $t\geq 0$. The quadratic Lyapunov function of the linear system (\ref{scl}) can be extended to the nonlinear system (\ref{scnl}) if
$$
\gamma<\frac{\alpha}{K^2}.
$$
\end{lemma}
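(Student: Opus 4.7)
The plan is to take the quadratic Lyapunov function $V(t,y) = y^{T} P(t) y$ constructed in Lemma \ref{lemma-forma_cuadratica_scl} and compute its derivative along the trajectories of the nonlinear system \eqref{scnl}. Using the matrix identity $-\dot P(t) = A^{T}(t)P(t) + P(t)A(t) + Q(t)$ supplied by (\ref{cotas-lyap}), the linear part collapses to $-y^{T}Q(t)y$, so the only new contribution is the cross term $2 y^{T} P(t) f(t,y)$. I would control this term via the Lipschitz estimate $|f(t,y)| = |f(t,y) - f(t,0)| \leq \gamma |y|$ (where the first equality uses the hypothesis $f(t,0)=0$) together with the uniform upper bound $P(t) \leq p^{+} I$ from (\ref{cotas-lyap}), producing the pointwise inequality
$$\dot V(t,y) \leq -q^{-}|y|^{2} + 2 p^{+} \gamma |y|^{2}.$$

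To extract the sharp threshold stated in the lemma I would apply Lemma \ref{lemma-forma_cuadratica_scl} with the specific choice $Q(t) = I$, so that $q^{-} = q^{+} = 1$ and $p^{+} = K^{2}/(2\alpha)$. The above estimate then becomes
$$\dot V(t,y) \leq - \left(1 - \frac{K^{2}\gamma}{\alpha}\right)|y|^{2},$$
which is negative definite precisely when $\gamma < \alpha / K^{2}$. Combined with the quadratic sandwich $p^{-}|y|^{2} \leq V(t,y) \leq p^{+}|y|^{2}$ supplied by \eqref{cotas-lyap}, this shows that $V$ is a strict Lyapunov function for \eqref{scnl}, which is the extension claimed in the statement (and incidentally gives an independent proof that the origin is globally uniformly asymptotically stable under this stronger smallness condition).

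The only delicate point is the bookkeeping of constants needed to reach the precise threshold $\gamma < \alpha / K^{2}$: for a generic positive definite $Q$, the corresponding condition that arises from the cross term is only $\gamma < \alpha q^{-}/(K^{2}q^{+})$, so specializing to $Q = I$ (or any $Q$ with $q^{-} = q^{+}$) is essential. Apart from that matching of constants, the argument is a direct application of Lyapunov's direct method, since the substantive linear estimates were already established in Lemma \ref{lemma-forma_cuadratica_scl}.
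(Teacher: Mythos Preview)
Your proof is correct and follows essentially the same route as the paper: compute $\dot V$ along trajectories of \eqref{scnl}, use the Lyapunov equation to collapse the linear part to $-y^{T}Q(t)y$, and bound the cross term $2y^{T}P(t)f(t,y)$ via $|f(t,y)|\leq\gamma|y|$ and $P(t)\leq p^{+}I$. If anything, your bookkeeping is cleaner than the paper's: the paper carries a generic $Q$ through the estimate and ends with the chain $\gamma\leq q^{-}/(2p^{+})=(q^{-}/q^{+})\,\alpha/K^{2}\leq\alpha/K^{2}$, leaving implicit that one must pick $q^{-}=q^{+}$ to actually reach the stated threshold, whereas you make the specialization $Q=I$ explicit from the outset.
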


\begin{proof}
Let us consider a positive definite, bounded matrix function $Q(t)$ such that
$$
0< q^{-} I\leq Q(t)\leq  q^{+}  I.
$$

Let $P(t)$ be the continuous, bounded, positive definite matrix solution of equation
$$
-\dot P(t)= A^T(t)P(t) + P(t)A(t) + Q(t).
$$

We consider the quadratic function $V(t,x)=x^TP(t)x$ as a Lyapunov candidate function for the nonlinear system. The derivative along the trajectories gives
\begin{displaymath}
\begin{array}{rcl}
\dot V(t,x)&=&\dot x^TP(t)x + x^T\dot P(t)x+x^TP(t)\dot x \\\\
&=&x^T\left[A(t)^TP(t)+P(t)A(t) +\dot P(t)\right]x + 2x^TP(t)f(t,x)\\\\
&=& -x^TQ(t)x + 2x^TP(t)f(t,x)\\\\
&\leq & - q^{-}|x|^2 + 2|x|\|P(t)\||f(t,x)|\\\\
&\leq & - q^{-}|x|^2 + 2\gamma \ p^{+}|x|^2\leq -\left(q^{-} -2\gamma p^{+}\right)|x|^2.
\end{array}
\end{displaymath}

Then, if
$$
\gamma\leq \frac{q^{-}}{2p^{+}}=\frac{q^{-}}{\ q^{+}}\frac{\alpha}{K^2}\leq \frac{\alpha}{K^2}
$$
we have a global Lyapunov function for the nonlinear system.
\end{proof}

\begin{remark} Lemma \ref{lemma-forma_cuadratica_scnl} is a result about the preservation of the uniform asymptotic stability which can be compared
with Theorem \ref{teorema1} and Lemma \ref{pres-UAS}. Indeed we know that the condition $\gamma < \frac \alpha K$ ensures the $\mathbb R^+$-strong topological equivalence between both systems and also that the global asymptotical stability is preserved by the homeomorphisms. We emphasize that our condition (\ref{hipotesis}) is less strict  that the one obtained by the Lyapunov function in Lemma \ref{lemma-forma_cuadratica_scnl}.

\end{remark}

\section{Smoothness of the Homeomorphisms $H$ and $G$}

Throughout this section we will assume that $x\mapsto f(t,x)$ is at least $C^{1}$ for any fixed $t$. Hence, it is well known (see \emph{e.g.} \cite[Chap. 2]{Coddington}) that
$\partial y(t,\tau,\eta)/\partial \eta$ satisfies the matrix differential equation
\begin{equation}
\label{MDE1}
\left\{
\begin{array}{rcl}
\displaystyle \frac{d}{dt}\frac{\partial y}{\partial\eta}(t,\tau,\eta)&=&\displaystyle \{A(t)+Df(t,y(t,\tau,\eta))\}\frac{\partial y}{\partial \eta}(t,\tau,\eta),\\\\
\displaystyle \frac{\partial  y}{\partial\eta}(\tau,\tau,\eta)&=&I.
\end{array}\right.
\end{equation}

The following result shows the $C^{r}$ differentiability of the map $G$ constructed previously
\begin{theorem}
Assume that {\bf{(P1)--\bf{(P3)}}}
and \textnormal{(\ref{hipotesis})}
are satisfied. If $y\mapsto f(t,y)$ is $C^{r}$ with $r\geq 1$ for any fixed $t\geq 0$,
 then \textnormal{(\ref{scl})} and \textnormal{(\ref{scnl})} are $C^{r}$ $\mathbb{R}^+-$ topologically equivalent.
\end{theorem}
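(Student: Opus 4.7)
The plan is to bypass any direct differentiation of the variation-of-parameters integral by exploiting the conjugacy identities (\ref{conj1})--(\ref{conj2}) already established in the proof of Theorem \ref{teorema1}. These identities force $H$ and $G$ to agree, on the nose, with compositions of the nonlinear flow of (\ref{scnl}) and the linear flow of (\ref{scl}); under the current smoothness hypothesis on $f$ both flows are $C^{r}$-diffeomorphisms, which is essentially all I need.

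Concretely, I would first put $\tau=0$ in (\ref{conj1}). Since the integrals defining $z^{*}$ and $w^{*}$ vanish at $t=0$, one has $H(0,\xi)=\xi$ and $G(0,\eta)=\eta$; combined with $x(t,0,\xi)=\Phi(t,0)\xi$, the identity reads $H(t,\Phi(t,0)\xi)=y(t,0,\xi)$. Substituting $\xi=\Phi(0,t)\eta$ gives the closed form
\begin{equation*}
H(t,\eta)=y\bigl(t,0,\Phi(0,t)\eta\bigr),
\end{equation*}
and the analogous argument applied to (\ref{conj2}) yields $G(t,\eta)=\Phi(t,0)\,y(0,t,\eta)$.

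To finish, I would invoke two ingredients. First, (\textbf{P1}) and (\textbf{P3}) imply that the vector field of (\ref{scnl}) is globally Lipschitz in $y$ uniformly in $t$, so its solutions extend globally both forward and backward in time; hence for every $t\geq 0$ the maps $\eta\mapsto y(t,0,\eta)$ and $\eta\mapsto y(0,t,\eta)$ are defined on all of $\mathbb{R}^{n}$ and are inverse to each other by uniqueness of solutions. Second, the classical theorem on smooth dependence of ODE solutions on initial conditions (see \emph{e.g.}\ \cite[Chap.~2]{Coddington}), together with the assumed $C^{r}$-regularity of $y\mapsto f(t,y)$, shows that both maps are $C^{r}$. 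Consequently each is a $C^{r}$-diffeomorphism of $\mathbb{R}^{n}$; composing with the linear automorphisms $\Phi(t,0)$ and $\Phi(0,t)$ keeps us in the class of $C^{r}$-diffeomorphisms, and the $C^{r}$ $\mathbb{R}^{+}$-topological equivalence follows from Theorem \ref{teorema1}.

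I do not expect a genuine obstacle: the entire argument rests on the observation that the normalisations $H(0,\cdot)=G(0,\cdot)=\mathrm{id}$ collapse the conjugacy relations (\ref{conj1})--(\ref{conj2}) into explicit factorisations, thereby displacing all differentiability questions from the Green's-function integral (where the integration range $[0,t]$ depends on $t$ and the integrand involves $f$ evaluated along solutions) into standard regularity theory for flows of $C^{r}$ vector fields. The only mildly technical point worth spelling out is the global invertibility of $\eta\mapsto y(t,0,\eta)$, which is immediate from the global Lipschitz bound on $A(t)y+f(t,y)$.
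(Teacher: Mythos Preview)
Your argument is correct, and it is in fact a cleaner packaging of what the paper does. The paper differentiates the integral formula (\ref{Homeo-G}) under the sign, then uses the variational equation (\ref{MDE1}) and an integration-by-parts identity to collapse the Jacobian to
\[
\frac{\partial G}{\partial \eta}(t,\eta)=\Phi(t,0)\,\frac{\partial y}{\partial \eta}(0,t,\eta),
\]
after which global invertibility is obtained via Hadamard's theorem and higher derivatives are read off from this same identity. Your observation that $H(0,\cdot)=G(0,\cdot)=\mathrm{id}$ turns the conjugacies (\ref{conj1})--(\ref{conj2}) into the closed forms
\[
G(t,\eta)=\Phi(t,0)\,y(0,t,\eta),\qquad H(t,\eta)=y\bigl(t,0,\Phi(0,t)\eta\bigr),
\]
which, upon differentiation, reproduce exactly the paper's Jacobian formula; so the two routes converge at the same identity. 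The gain of your approach is twofold: you never touch the $t$-dependent integral (the paper's ``machine'' computation becomes unnecessary), and you do not need Hadamard's theorem, since the inverse of $\eta\mapsto y(0,t,\eta)$ is explicitly $\eta\mapsto y(t,0,\eta)$ by the flow property. The paper's approach, on the other hand, makes the link with the Green's-function construction more transparent and yields the derivative formulas without first exhibiting the factorisation.

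One small wording issue: you write that solutions extend ``globally both forward and backward in time'', but $A$ and $f$ are only defined on $\mathbb{R}^{+}$. What you actually need---and what the global Lipschitz bound gives---is existence on the compact interval $[0,t]$ for every $t\ge 0$, so that $y(0,t,\eta)$ is defined for all $\eta$. This is harmless, but worth phrasing precisely.
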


The proof of this result will be a consequence of the following Lemma and Remarks.
\begin{lemma}
\label{derivabilidad-1}
Under the assumptions of Theorem \ref{teorema1}, if $f$ is $C^1$ then $\eta \mapsto G(t,\eta)$ is a $C^{1}$ diffeomorphism for any fixed $t\geq 0$. Moreover, its Jacobian matrix is
\begin{equation}
\label{jacobiano-G}
\frac{\partial G}{\partial \eta}(t,\eta)=\Phi(t,0)\frac{\partial y(0,t,\eta)}{\partial \eta}.
\end{equation}
\end{lemma}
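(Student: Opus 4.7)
The plan is to first obtain a concise closed-form expression for $G(t,\eta)$ that makes the differentiability analysis almost automatic. Using the conjugation identity (\ref{conj2}) from Step 3 of Theorem \ref{teorema1}, namely $G[t,y(t,\tau,\eta)] = \Phi(t,\tau)\,G(\tau,\eta)$, I specialize the ``terminal'' time to $0$ and the ``initial'' time to $t$, and combine this with the observation that $G(0,\cdot) = \mathrm{Id}$ (the integral in (\ref{Homeo-G}) vanishes at $t=0$). This produces the identity
\begin{equation*}
G(t,\eta) = \Phi(t,0)\,y(0,t,\eta),
\end{equation*}
which may alternatively be verified directly by substituting the variation-of-parameters formula for $y(0,t,\eta)$ into (\ref{Homeo-G}) and collapsing the transition matrices using $\Phi(t,0)\Phi(0,r)=\Phi(t,r)$.

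With this formula in hand, the $C^1$ smoothness of $\eta \mapsto G(t,\eta)$ reduces to classical smooth dependence of ODE solutions on initial conditions: since $f(t,\cdot)\in C^1$, the map $\eta \mapsto y(0,t,\eta)$ is $C^1$ and its Jacobian $\tfrac{\partial y(0,t,\eta)}{\partial \eta}$ equals the value at $s=0$ of the fundamental matrix solving the variational equation (\ref{MDE1}). Left-multiplying by the ($\eta$-independent) matrix $\Phi(t,0)$ yields the Jacobian formula (\ref{jacobiano-G}) at once.

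To upgrade from ``$C^1$ map'' to ``$C^1$ diffeomorphism'', I need invertibility of the Jacobian at every $\eta$. The transition matrix $\Phi(t,0)$ is invertible, and $\tfrac{\partial y(0,t,\eta)}{\partial \eta}$ is invertible as the value of a matrix solution to a linear homogeneous system with non-singular initial data $I$ at $s=t$. Theorem \ref{teorema1} already delivers the global bijectivity of $G(t,\cdot)$ together with a continuous inverse $H(t,\cdot)$, so the inverse function theorem applied pointwise, combined with this global inverse, promotes $G(t,\cdot)$ to a global $C^1$ diffeomorphism (and forces $H(t,\cdot)$ to be $C^1$ as well).

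The only delicate bookkeeping step is the correct relabeling of indices in (\ref{conj2}) to extract the closed form for $G$; once that identity is in hand, the remainder of the argument is a direct appeal to standard results on linear ODEs, the variational equation, and the inverse function theorem, so I do not anticipate any substantive obstacle beyond this initial algebraic manipulation.
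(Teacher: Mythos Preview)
Your argument is correct and reaches the same Jacobian formula as the paper, but by a genuinely different and somewhat more economical route.

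The paper does not first derive your closed form $G(t,\eta)=\Phi(t,0)\,y(0,t,\eta)$. Instead it differentiates the integral representation (\ref{Homeo-G}) directly to obtain
\[
\frac{\partial G}{\partial\eta}(t,\eta)=I-\int_{0}^{t}\Phi(t,s)\,Df(s,y(s,t,\eta))\,\frac{\partial y}{\partial\eta}(s,t,\eta)\,ds,
\]
and then collapses this integral by recognizing the integrand as $\tfrac{d}{ds}\bigl\{\Phi(t,s)\,\tfrac{\partial y}{\partial\eta}(s,t,\eta)\bigr\}$ via the variational equation (\ref{MDE1}) and the identity $\Phi(t,s)A(s)=-\tfrac{\partial}{\partial s}\Phi(t,s)$, landing on (\ref{jacobiano-G}). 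Your approach shortcuts this computation: once $G(t,\eta)=\Phi(t,0)\,y(0,t,\eta)$ is in hand, the Jacobian formula is immediate by the chain rule. The paper's calculation is the ``differentiated'' version of the same identity, but yours makes the structure more transparent and extends trivially to higher derivatives (which the paper exploits separately in Remark~\ref{sofit}).

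For the global diffeomorphism conclusion the two arguments also differ. The paper appeals to Hadamard's global inversion theorem, checking that $\det\tfrac{\partial G}{\partial\eta}\neq 0$ and that $|G(t,\eta)|\to\infty$ as $|\eta|\to\infty$ (from the uniform bound on $w^{*}$). You instead invoke the bijectivity of $G(t,\cdot)$ already proved in Step~4 of Theorem~\ref{teorema1} and combine it with the pointwise inverse function theorem. Both are valid; your route reuses existing work, while the paper's route is self-contained at the cost of re-establishing bijectivity through Hadamard.
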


\begin{proof}
It is well known that (see \emph{e.g.}, Theorem 4.1 from \cite[Ch.V]{Hartman}) if $y\mapsto f(t,y)$ is $C^{r}$ with $r\geq 1$, then the map
$\eta \mapsto y(t,\tau,\eta)$ is also $C^{r}$ for any  fixed couple $(t,\tau)$. Then, as $y\mapsto f(t,y)$ is $C^{1}$, it follows that $y\mapsto Df(t,y)$ and  $\eta \mapsto \partial y/\partial \eta$ are continuous.
This allows to calculate the first partial derivatives of the map $\eta\mapsto G(t,\eta)$ for any $t\geq 0$ as follows
\begin{equation}
\label{derivada-parcial}
\frac{\partial G}{\partial \eta_{i}}(t,\eta)=e_{i}-\int_{0}^{t}\Phi(t,s)Df(s,y(s,t,\eta))\frac{\partial y}{\partial \eta_{i}}(s,t,\eta)\,ds \quad (i=1,\ldots,n),
\end{equation}
which implies that the partial derivatives exists and are continuous for any fixed $t\geq 0$, then $\eta \mapsto G(t,\eta)$ is $C^{1}$.

By using the identity $\Phi(t,s)A(s)=-\frac{\partial }{\partial s}\Phi(t,s)$ combined with
(\ref{MDE1}) we can deduce that for any $t\geq 0$, the Jacobian matrix is given by
\begin{equation}
    \label{machine}
\begin{array}{rcl}
\displaystyle\frac{\partial G}{\partial\eta}(t,\eta)&=&  \displaystyle I-\int_{0}^{t}\Phi(t,s)Df(s,y(s,t,\eta))
\frac{\partial y}{\partial\eta}(s,t,\eta)\,ds \\\\
&=&I - \displaystyle \int_{0}^{t}\frac{d}{ds}\left\{\Phi(t,s)\frac{\partial y}{\partial \eta}(s,t,\eta)\right\}\,ds \\\\
&=&\displaystyle \Phi(t,0)\frac{\partial y(0,t,\eta)}{\partial \eta},
\end{array}
\end{equation}
and Theorems 7.2 and 7.3 from  \cite[Ch.1]{Coddington}  imply that
$Det \frac{\partial G(t,\eta)}{\partial \eta}>0$ for any $t\geq 0$.

Summarizing, we have that $\eta \mapsto G(t,\eta)$ is $C^{1}$ and its Jacobian matrix has a non vanishing determinant. In addition, let us recall that
$$
G(t,\eta)=\eta + w^{*}(t;(t,\eta)),
$$
where $w^{*}(t;(t,\eta))$ is given by (\ref{w-star}), we can deduce that
$|G(t,\eta)|\to +\infty$ as $|\eta|\to +\infty$, due to $|w^{*}(t;(t,\eta))|\leq K\mu/\alpha$ for any $(t,\eta)$.

Therefore, by Hadamard's Theorem (see \emph{e.g} \cite{Plastock, Radulescu}), we conclude that $\eta \mapsto G(t,\eta)$ is a global diffeomorphism for any fixed $t\geq 0$.
\end{proof}

\begin{remark}
\label{Rademacher}
It is interesting to point out that the computation of the partial derivatives is remarkably simple when considering $\mathbb{R}^+\times\mathbb{R}^{n}$ as domain of $G$. In the case when the domain is $\mathbb{R}\times\mathbb{R}^{n},$ we refer to \cite{JDE} for details.
\end{remark}

\begin{remark}
\label{sofit}
In particular, if $y\mapsto f(t,y)$ is $C^{2}$, we can verify that
the second derivatives $\partial^{2}y(s,\tau,\eta)/\partial \eta_{j}\partial\eta_{i}$  satisfy
 the system of differential equations
\begin{equation}
\label{MDE15}
\left\{
\begin{array}{rcl}
\displaystyle \frac{d}{dt}\frac{\partial^{2}y}{\partial\eta_{j}\partial\eta_{i}}&=&\displaystyle
\{A(t)+Df(t,y)\}\frac{\partial^{2}y}{\partial\eta_{j}\partial\eta_{i}}
+D^{2}f(t,y)\frac{\partial y}{\partial\eta_{j}}\frac{\partial y}{\partial\eta_{i}} \\\\
\displaystyle \frac{\partial^{2}y}{\partial\eta_{j}\partial\eta_{i}}  &=&0,
\end{array}\right.
\end{equation}
with $y=y(t,\tau,\eta)$, for any $i,j=1,\ldots,n$. By using (\ref{derivada-parcial}) and (\ref{MDE15}) we have
\begin{displaymath}
\begin{array}{rcl}
\displaystyle \frac{\partial^{2} G}{\partial\eta_{j}\partial \eta_{i}}(t,\eta)&=&\displaystyle -\int_{0}^{t}\Phi(t,s)D^{2}f(s,y(s,t,\eta))\frac{\partial y}{\partial \eta_{j}}(s,t,\eta)\frac{\partial y}{\partial \eta_{i}}(s,t,\eta)\,ds\\\\
& & \displaystyle
-\int_{0}^{t}\Phi(t,s)Df(s,y(s,t,\eta))\frac{\partial^{2} y(s,t,\eta)}{\partial\eta_{j}\partial\eta_{i}}\,ds\\\\
&=&\displaystyle -\int_{0}^{t}\frac{d}{ds}\left\{\Phi(t,s)
\frac{\partial^{2} y(s,t,\eta)}{\partial\eta_{j}\partial\eta_{i}}\right\}\,ds\\\\
&=&\displaystyle  \Phi(t,0)\frac{\partial^{2} y(0,t,\eta)}{\partial\eta_{j}\partial\eta_{i}}.

\end{array}
\end{displaymath}

\end{remark}

Notice that we can obtain the same expression for the second partial derivatives by using directly (\ref{machine}) instead of Remark \ref{sofit}.
In addition, as $\eta \to y(0,t,\eta)$ is $C^{r}$ when $f(t,\cdot)$ is $C^{r}$, we use the identity (\ref{machine}) to deduce that the $m$--th partial derivatives of $\eta \mapsto G(t,\eta)$ for any fixed $t\geq 0$ are given by
\begin{displaymath}
\frac{\partial^{|m|} G(t,\eta)}{\partial\eta_{1}^{m_{1}}\cdots \partial \eta_{n}^{m_{n}}}=\Phi(t,0)\frac{\partial^{|m|} y(0,t,\eta)}{\partial\eta_{1}^{m_{1}}\cdots \partial \eta_{n}^{m_{n}}}, \quad \textnormal{where $|m|=m_{1}+\ldots+m_{n}\leq r$},
\end{displaymath}
and the Theorem follows.

\section{Conclusions}
In this work, we have obtained a sufficient condition for the $\mathbb R^+-$ strong topological equivalence between systems \eqref{scl} and \eqref{scnl}. We have also confronted this result with the existence of a quadratic Lyapunov function for the linear system and its extension to the nonlinear one. Finally, we proved some results about differentiability of the homeomorphism constructed between systems \eqref{scl} and \eqref{scnl}, remarking the simpleness of the proof.


\begin{thebibliography}{99}

\bibitem{JDE} \'A. Casta\~neda and G. Robledo. Differentiability of Palmer's linearization theorem and converse result for density functions. \emph{J. Differential Equations} \textbf{259} (2015), 4634--4650.

\bibitem{Coddington} E. Coddington and N. Levinson.
\emph{Theory of Ordinary Differential Equations} (Mc Graw--Hill: New York, 1955).

\bibitem{Coppel} W. Coppel.
\emph{Dichotomies in Stability Theory}.
Lecture Notes in Mathematics (Berlin: Springer, 1978).

\bibitem{Cuong}
L.V. Cuong, T.S. Doan, S. Siegmund A Sternberg Theorem for Nonautonomous Differential Equations. \emph{J. Dynam. Differential Equations} (2018) doi.org/10.1007/s10884-017-9629-8


\bibitem{Grobman}
D.M. Grobman. Homeomorphism of systems of differential equations, \emph{Dokl. Akad. Nauk SSSR} \textbf{128} (1959), 880--881 (Russian).

\bibitem{Hartman0}
P. Hartman. On local homeomorphisms of Euclidean spaces, \emph{Bol. Soc. Mat. Mexicana} \textbf{5} (1960), 220--241.

\bibitem{Hartman} P. Hartman.
\emph{Ordinary Differential Equations}
(SIAM: Philadelphia, 2002).

\bibitem{Jiang} L. Jiang. Generalized exponential dichotomy and global linearization.
\emph{J. Math. Anal. Appl.} \textbf{315} (2006), 474--490.

\bibitem{Khalil} H. Khalil.
\emph{Nonlinear Systems} (Prentice Hall: Upper Saddle River NJ, 1996).

\bibitem{Kloeden} Kloeden P.E., Rasmussen M.
\emph{Nonautonomous Dynamical Systems} (American Mathematical Society: Providence RI, 2011).


\bibitem{Palmer} K.J. Palmer. A generalization of Hartman’s linearization theorem. \emph{J. Math. Anal. Appl.} \textbf{41}
(1973), 753--758.

\bibitem{Palmer79}
K.J. Palmer. The structurally stable linear systems on the half-line are those with exponential dichotomies. \emph{J. Differential Equations} \textbf{33} (1979), 16--25.

\bibitem{Plastock}
R. Plastock. Homeomorphisms between Banach spaces. \emph{Trans. Amer. Math. Soc.}
\textbf{200} (1974), 1691--7183.

\bibitem{Pugh} C. Pugh.
On a theorem of Hartman,
\emph{American Journal of Mathematics} \textbf{91} (1969), 363--367.

\bibitem{Radulescu}M. Radulescu, S. Radulescu. Global inversion theorems and applications to
differential equations. \emph{Nonlinear Anal}. \textbf{4} (1980), 951--965.

\bibitem{Reinfelds} A. Reinfelds, D. Steinberga.
Dynamical equivalence of quasilinear equations, \emph{International Journal of Pure and Applied Mathematics} \textbf{98} (2015), 355--364.

\bibitem{Shi}
J.L.Shi, K.Q Xiong.
On Hartman's linearization theorem and Palmer's linearization theorem.
\emph{J. Math. Anal. Appl.} \textbf{192} (1995), 813--832.

\bibitem{Xia}
Y-H. Xia, R. Wang, K.I.  Kou, D. O'Regan.
On the linearization theorem for nonautonomous differential equations.
\emph{Bull. Sci. Math.} \textbf{139} (2015), 829--846.

\end{thebibliography}
\end{document}